\documentclass[12pt,reqno]{amsart}
\textheight=22truecm

\usepackage{amsmath}
\usepackage{amsfonts}
\usepackage{amssymb,esint}
\usepackage{mathrsfs}
\usepackage{hyperref}
\usepackage{stmaryrd}
\usepackage{multirow}
\usepackage{verbatim}
\usepackage{cancel}
\usepackage{xcolor}

\usepackage[T1]{fontenc}
\usepackage[utf8]{inputenc}
\usepackage[english]{babel}
\usepackage[a4paper,top=2cm,bottom=2cm,left=2cm,right=2cm]{geometry}
\usepackage{graphicx}
\usepackage{amsthm}
\usepackage{braket}
\usepackage[autostyle]{csquotes}
\usepackage{pdfpages}
\hypersetup{
	colorlinks=true,
	citecolor=black,
	linkcolor=black,}
\usepackage[capitalise]{cleveref}
\usepackage{tikz-cd}
\usepackage{enumerate}
\usepackage{fancyhdr}
\usepackage{array}
\usepackage{mathabx}
\usepackage{xspace}


\theoremstyle{plain}
\newtheorem{theorem}{Theorem}[section]
\newtheorem{lemma}[theorem]{Lemma}
\newtheorem{corollary}[theorem]{Corollary}
\newtheorem{conjecture}[theorem]{Conjecture}
\newtheorem{proposition}[theorem]{Proposition}
\theoremstyle{definition}
\newtheorem{definition}[theorem]{Definition}
\newtheorem{remark}[theorem]{Remark}
\newtheorem{example}[theorem]{Example}

\newcommand{\R}{\mathbb R}
\newcommand{\N}{\mathbb N}

\newcommand{\G}{\mathbb G}

\newcommand{\Sf}{\mathbb S}

\newcommand{\galg}{\mathfrak g}

\newcommand{\spn}{\mathrm{span}}

\newcommand{\mh}{\mathrm{MinHeight}}
\newcommand{\vol}{\mathrm{vol}}

\newcommand{\abn}{\mathrm{Abn}}

\newcommand{\de}{\mathrm{d}}

\newcommand{\gr}{\mathrm{Gr}}
\newcommand{\g}{\mathfrak{g}}

\newcommand{\p}{\mathcal{P}}
\newcommand{\dcc}{d_{\mathrm{cc}}}
\newcommand{\deu}{d_{\mathrm{eu}}}
\newcommand{\ddt}{\frac{\de}{\de t}}
\newcommand{\sfeu}{\Sf_{\mathrm{eu}}}
\newcommand{\sfcc}{\Sf_{\mathrm{cc}}}

\newcommand{\twowedge}{\textstyle{\bigwedge^2} }

\author{Enrico Le Donne, Luca Nalon}
\address{D\'epartement de Math\'ematiques, Ch. du mus\'ee 23, 1700 Fribourg (CH)}
\email{luca.nalon@unifr.ch}
\email{enrico.ledonne@unifr.ch}
\thanks{Both authors are partially supported by the Swiss National Science Foundation
	(grant 200021-204501 `\emph{Regularity of sub-Riemannian geodesics and
		applications}')
	and by the European Research Council (ERC Starting Grant 713998 GeoMeG
	`\emph{Geometry of Metric Groups}').}

\begin{document}

\title{Euclidean rectifiability of sub-Finsler spheres in free-Carnot groups of step 2}

\maketitle

\begin{abstract}
    We consider $2$-step free-Carnot groups equipped  with sub-Finsler distances. We prove that the metric spheres are codimension-one rectifiable from the Euclidean viewpoint. The result is obtained by studying how the Lipschitz constant for the distance function behaves near abnormal geodesics.
\end{abstract}

\tableofcontents

\section{Introduction}
The regularity of spheres in sub-Riemannian and sub-Finsler structures is related to a peculiar phenomenon of these geometries: the existence of abnormal geodesics, for an introduction to sub-Riemannian geometry and its abnormal curves see \cite{Montgomery}. Indeed, in \cite{regularity}, the authors proved that, in the case of sub-Finsler Carnot groups, the absence of non-constant abnormal geodesic implies that spheres are boundaries of Euclidean Lipschitz domains. Conversely, sub-Finsler spheres with cusps do appear in Carnot groups with non-trivial abnormal geodesic. The presence of such cusps is related to the rate of convergence to the asymptotic cones of finitely generated nilpotent groups, see \cite{rate} for details. In this paper we address the regularity of spheres for the class of free-Carnot groups of step 2, in terms of Euclidean rectifiability.
\begin{definition}
[Euclidean  rectifiable subset]  
Let $\mathbb{M}$ be a smooth manifold of dimension $d$ and let $k$ be a natural number. 
    A subset $S\subseteq \mathbb{M}$  is {\em Euclidean $k$-rectifiable}, if 
$\mathcal{H}_{\mathrm{Riem}}^k(S)<\infty$ and there are countably many measurable subsets $E_n \subseteq \R^n$ and Lipschitz functions $f_n\colon E_n \subseteq \R^k \to \mathbb{M}$ such that
    \begin{equation}\label{eq_def_rect}
            \mathcal{H}_{\mathrm{Riem}}^k\left( S \,\, \setminus \, \bigcup^\infty_{n=0} f_n(E_n) \right)= 0  \, ,
        \end{equation}
    where $\R^k$ is endowed with the Euclidean metric, the space
    $\mathbb{M}$ is endowed with some   Riemannian metric,
    and the $k$-dimensional Hausdorff measure $\mathcal{H}^k_{\mathrm{Riem}}$ is taken with respect this Riemannian metric. 
\end{definition}
We remark that the notion of Euclidean rectifiability for bounded subsets of $\mathbb{M}$ is independent of the choice of the Riemannian metric. The main result of this article is the following.
\begin{theorem} \label{rectifiability}
    Let $\G$ be a sub-Finsler free-Carnot group of step $2$. Denote by $\dcc$ the corresponding Carnot-Carathéodory distance. Then the unit sphere, defined as
    \begin{equation*}
     \sfcc \coloneq \set{ g \in \G \, | \, \dcc(g,0_\G) = 1 } \, , 
    \end{equation*}
    is Euclidean $(d-1)$-rectifiable, where $d$ is the topological dimension of $\G$ and $0_\G$ is its identity element.
\end{theorem}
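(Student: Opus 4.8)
The plan is to split the sphere according to the type of length-minimizers realizing each of its points, and to treat the two resulting pieces by completely different mechanisms. Decompose $\sfcc = N \sqcup A$, where $A$ consists of those $g \in \sfcc$ admitting no normal minimizer from $0_\G$ (so every minimizing curve to $g$ is strictly abnormal) and $N := \sfcc \setminus A$ consists of the points admitting at least one normal minimizer. By the Pontryagin Maximum Principle every minimizer is either normal or abnormal, so the dichotomy is exhaustive, and it suffices to show that $N$ is Euclidean $(d-1)$-rectifiable with $\mathcal{H}^{d-1}_{\mathrm{Riem}}(N) < \infty$ while $\mathcal{H}^{d-1}_{\mathrm{Riem}}(A) = 0$.

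For the abnormal piece I would invoke the rigidity of abnormal curves in step $2$. Realizing $\G$ with horizontal layer $V_1 \cong \R^n$ and second layer $\Lambda^2 V_1 \cong \mathfrak{so}(n)$, a horizontal curve through the origin is abnormal precisely when its velocity lies, for almost every time, in the kernel of some fixed nonzero skew-symmetric form $Q$; since $\ker Q$ has codimension at least two, such a curve stays inside the Carnot subgroup $\G_W$ generated by $W := \ker Q$, which is itself free of step $2$ on $\dim W \le n-2$ generators. Hence $A \subseteq \bigcup_{\dim W = n-2} \G_W$. Viewing this union as the image of a smooth fibration over the Grassmannian $\gr(n-2,n)$ with fibre $\G_W$, its Hausdorff dimension is at most $(n-2) + \binom{n-2}{2} + 2(n-2)$, which one computes to be exactly $d-3$; in particular $\mathcal{H}^{d-1}_{\mathrm{Riem}}(A) = 0$, so $A$ is negligible for the required covering.

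For the normal piece, the point is that every $g \in N$ is the time-one endpoint of the $\G$-projection of a Hamiltonian trajectory whose initial covector may be normalized onto the $(d-1)$-dimensional level set $\Lambda_1 := \{\lambda : H(\lambda) = \tfrac12\}$ of the sub-Finsler Hamiltonian, and that minimality up to time $1$ confines the second-layer component of this covector to a bounded region; thus $N \subseteq \mathrm{Exp}(K)$ for some bounded $K \subseteq \Lambda_1$, where $\mathrm{Exp}\colon \Lambda_1 \to \G$ is the time-one exponential. Let $B \subseteq K$ be the set of covectors whose geodesics are \emph{also} abnormal; arguing exactly as in the previous paragraph, $\mathrm{Exp}(B)$ lies in the same finite union of lower-dimensional subgroups and is therefore $\mathcal{H}^{d-1}_{\mathrm{Riem}}$-null. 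The plan is then to prove that $\mathrm{Exp}$ is locally Lipschitz on the open set $K \setminus B$; exhausting $K \setminus B$ by compact pieces exhibits $\mathrm{Exp}(K \setminus B)$ as a countable union of Lipschitz images of bounded subsets of a $(d-1)$-manifold, hence as a Euclidean $(d-1)$-rectifiable set. The crux — and the step I expect to be by far the hardest, the one the paper's technique is built to carry out — is the \emph{quantitative} form of this statement: for non-smooth Finsler norms the Lipschitz constant of $\mathrm{Exp}$, equivalently of the distance function $\dcc$ near the corresponding endpoints, blows up as one approaches the abnormal locus $B$, and one must bound its rate of blow-up sharply enough that the total $(d-1)$-dimensional measure of $\mathrm{Exp}(K \setminus B)$ remains finite. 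Granting this estimate, $\sfcc = N \cup A$ is a Euclidean $(d-1)$-rectifiable set of finite $\mathcal{H}^{d-1}_{\mathrm{Riem}}$ measure, which is exactly the assertion of the theorem.
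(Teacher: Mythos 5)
Your proposal follows a genuinely different route from the paper and has a gap that is fatal in the stated generality. You decompose $\sfcc$ by minimizer type (normal vs.\ strictly abnormal) and propose to parametrize the normal part via a time-one exponential map $\mathrm{Exp}\colon \Lambda_1 \to \G$ from the level set of the sub-Finsler Hamiltonian. But the theorem is for arbitrary sub-Finsler norms $\lVert \cdot \rVert_{\mathrm{sf}}$, including non-strictly-convex ones such as the $\ell^1$ norm used in the paper's own \cref{optimality}. For such norms the Hamiltonian is not differentiable and, crucially, the Pontryagin maximum condition does not select a unique control from a given covector: there is in general no well-defined single-valued map $\lambda \mapsto \mathrm{Exp}(\lambda)$, so the object you plan to prove Lipschitz away from the abnormal locus does not exist as a map. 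The paper explicitly remarks that the sub-Riemannian argument (which is essentially what your parametrization is) ``does not generalize to sub-Finsler distances,'' and it avoids the covector side entirely: it writes $\sfcc = \phi(\mathrm{gr}\, d_0|_{\sfeu})$ with $d_0 = \dcc(0_\G,\cdot)$ and works with the distance function itself, not its extremal flow. Your treatment of the abnormal piece $A$ (containment in $\bigcup_{\mathcal{P}\in\gr(n,n-2)} \mathcal{P} \times \twowedge(\mathcal{P})$, dimension count $d-3$, hence $\mathcal{H}^{d-1}$-null) is correct and matches \cref{codim3}, but it is the easy half.

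Even setting aside the well-posedness of $\mathrm{Exp}$, you explicitly punt on the step you yourself identify as the crux: you write ``granting this estimate'' at precisely the point where the actual mathematics lies. ``Locally Lipschitz on $K \setminus B$, exhaust by compacts'' by itself only yields a countable union of rectifiable pieces; it does \emph{not} give $\mathcal{H}^{d-1}_{\mathrm{Riem}}(\sfcc) < \infty$, which is part of the definition of Euclidean rectifiability used here. One needs the rate of blow-up of the Lipschitz constant to beat the rate at which the shells near the abnormal set shrink in measure. The paper carries this out concretely: \cref{lippow} bounds the Lipschitz constant of $d_0$ on $\sfeu \setminus A_\delta$ by $C_5\delta^{-1}$, \cref{dimabn} bounds $\mathcal{H}^{d-1}(A_\delta)$ by $C_3\delta^3$ via the Minkowski content of the codimension-$3$ variety $\abn(\G)\cap\sfeu$, and the dyadic decomposition $E_n = A_{2^{-n}}\setminus A_{2^{-n-1}}$ together with the area formula (\cref{areaformula}) yields $\mathcal{H}^{d-1}(f_n(E_n)) \lesssim 2^{-2n}$, a summable series. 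Your proposal contains none of this arithmetic, so it does not establish finiteness of measure and hence does not prove the theorem.
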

We refer to \cref{setting} for the notion of sub-Finsler free-Carnot group of step $2$ and Carnot-Carathéodory distance. We refer to \cref{proofs} for the proof of \cref{rectifiability}. In this context, abnormal geodesic are length-minimizing curves with a control that is singular with respect to the end-point map \eqref{def_endppointmap}, see \cref{section_4}. In Carnot groups of step $2$ it is known that the \emph{abnormal set} $\abn$, i.e., the set of points reached by abnormal curves passing through the identity element of the group, is contained in an algebraic variety of co-dimension $3$, see \cite{sard_step2}. Free-Carnot groups of step $2$ are a class for which this bound cannot be improved, in this sense they have the largest possible amount of abnormal curves within Carnot groups of step $2$. We stress that very few result concerning Euclidean rectifiability of spheres in sub-Finsler structures with non-trivial abnormal curves are present in the literature, all of which study the problem for a specific sub-Finsler norm, see \cite{regularity_cartan, sub-finsler_structures}. Nonetheless, the statement of \cref{rectifiability} has been known in the sub-Riemannian case. Indeed, sub-Riemannian structures of step $2$ have no Goh extremals, hence, by \cite[Proposition~4.2]{mass_transportation}, outside of the diagonal the Carnot-Carathéodory distance is Lipschitz with respect to the Riemannian distance. This implies that level sets of the distance from a fixed point, i.e.\ spheres, are rectifiable sets for almost every value. In Carnot groups, the existence of dilations ensures the result. This strategy, valid for sub-Riemannian $2$-step Carnot groups, does not generalize to sub-Finsler distances, since the distance may not be Euclidean Lipschitz, see \cite{rate}.

The strategy that we follow to prove \cref{rectifiability} starts by considering on every such a $\G$ a Riemannian metric $\deu$ coming from specific scalar products (we call them \emph{adequate}, see \eqref{adequate}) on $\G$, seen in exponential coordinates as \eqref{def_G}. Following \cite[Section~4.1]{regularity}, we interpret the unit sphere $\sfcc$ as the graph of the function $\dcc(0_\G, \cdot)$, restricted to the unit sphere $\sfeu$ of the distance $\deu$. We continue by building upon the fact that, in this setting, the abnormal set is an algebraic variety of codimension $3$ to get information on its Minkowski content. We use this information to study the asymptotic behaviour of the Hausdorff measure for tubular neighborhoods of $\abn \cap \sfeu$. The critical part of the argument is to find a local bound for the Lipschitz constant of $\dcc(0_\G, \cdot)$ at a given point on $\sfeu$, in terms of its distance from the abnormal set. In this regard, \cref{lippow} states that this Lipschitz constant is bounded (up to a uniform constant) by $\delta^{-1}$, where $\delta$ is the distance of that point from the abnormal set. Finally, we combine all the previous results and, by applying a suited version of the area formula for graph functions, we prove \cref{rectifiability}.

The scheme of reasoning that we summarized is rather general as all the results in \cref{section_3} arbitrarily hold for Carnot groups. Nonetheless, a strong version of the Sard property (as in \cite{sardproperty}) is needed in other to get a result analog to \cref{dimabn}. However, this kind of results are known only for Carnot groups of small rank and/or step, see \cite{sard_step2,sardproperty,codimension}, even though no example of Carnot groups where the abnormal set has co-dimension smaller than $3$ is known. A possibly harder phenomenon to study in wider classes of Carnot groups is the asymptotic local behaviour of the Lipschitz constant of $\dcc(0_\G, \cdot)$, with respect to the distance from the abnormal set. In the latter part of \cref{section_5} we exhibit an example of a sub-Finsler free-Carnot group of step $2$ for which the bound of \cref{lippow} is optimal. However, we are currently not able to provide examples of sub-Finsler Carnot groups of step $2$ where the local Lipschitz constant of $\dcc(0_\G, \cdot)$ diverge to infinity faster than the reciprocal of the distance from the abnormal set. We are then lead to state the following conjecture.

\begin{conjecture} \label{conjecture}
  Let $\G$ be a sub-Finsler Carnot group of step $2$, denote by $\dcc$ its Carnot-Carathéodory distance and by $\abn$ the abnormal set of $\G$. Fix on $\G$ a Riemannian metric $\deu$. Let $K$ be a compact subset that does not contain $0_\G$, e.g., $K=\sfeu$.
  Then, there exists a constant $C > 0$ such that, for every $\delta>0$, the function $\dcc(0_\G,\cdot)$ is $\left(C\delta^{-1}\right)$-Lipschitz (with respect to the metric $\deu$) when restricted to the set $\Set{ g \in 
  K
  \colon \deu(g,\abn) \ge \delta}$.
\end{conjecture}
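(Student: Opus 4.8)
The plan is to promote to a general step-$2$ Carnot group the scheme that yields \cref{lippow} for free-Carnot groups, so that the argument below would in particular reprove \cref{lippow}; the first step is to reduce \cref{conjecture} to a bound on the covectors of minimizing geodesics. Write $f\coloneq\dcc(0_\G,\cdot)$; since $\deu(\cdot,\abn)$ is bounded on $K$ we may assume $0<\delta\le D\coloneq\operatorname{diam}_{\dcc}\!\big(K\cup\{0_\G\}\big)<\infty$, and on the set $\{g\in K:\deu(g,\abn)\ge\delta\}$ one has $g\notin\abn$. In a step-$2$ Carnot group the points joined to $0_\G$ by an abnormal curve are precisely those of $\abn$, so every length-minimizer from $0_\G$ to such a $g$ is strictly normal and carries a normal Pontryagin lift $\lambda\colon[0,1]\to T^*\G$; by a standard fact of optimal control — the final covector of a minimizing normal trajectory lies in the Fréchet superdifferential of the value function at its endpoint — $\lambda(1)$ belongs to the superdifferential of $f$ at $g$. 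Combining this with the continuity of $f$, with the elementary fact that a continuous function whose superdifferential is nonempty and of norm $\le M$ (in the cometric dual to $\deu$) at every point of a Euclidean segment is $M$-Lipschitz along that segment, and with the boundedness of $f$ on $K$, \cref{conjecture} follows once one proves: \emph{there is $C>0$ such that for every $g\in K$ with $\deu(g,\abn)\ge\delta$ and every minimizing normal geodesic from $0_\G$ to $g$ the final covector $\lambda(1)$ has norm at most $C\delta^{-1}$ in the cometric dual to $\deu$.}

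For this covector bound I would work in exponential coordinates $\G=\g=V_1\oplus V_2$ with an adequate scalar product, so that $\deu$ is the associated Euclidean metric, and write $\lambda(s)=(p(s),\tau)\in V_1^*\oplus V_2^*$. In step $2$ the vertical component $\tau$ is a first integral, hence constant along the geodesic, while $\dot p(s)=-A_\tau\dot\gamma_1(s)$, where $A_\tau\colon V_1\to V_1^*$ is the skew operator of the $2$-form $\tau\circ[\cdot,\cdot]$. Conservation of the Hamiltonian makes $s\mapsto p(0)-\tfrac12A_\tau\gamma_1(s)$ have constant sub-Finsler dual norm, equal to the length $L=f(g)\le D$; hence this vector — and in particular $p(0)$ and $A_\tau\gamma_1(s)$ — has Euclidean norm $O(L)$. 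Splitting $V_1=\ker A_\tau\oplus(\ker A_\tau)^{\perp}=:W_0\oplus W_1$, invertibility of $A_\tau|_{W_1}$ combined with the bound on $A_\tau\gamma_1(s)=A_\tau(\gamma_1(s))_{W_1}$ forces $(\gamma_1(s))_{W_1}$ to have size $O(L/|\tau|)$. Inserting this into the area-type identity $\gamma_2(1)=\tfrac12\int_0^1[\gamma_1(s),\dot\gamma_1(s)]\,\de s$ and isolating the one summand that involves no $W_1$-direction shows that $\gamma_2(1)$ differs by $O\big((L+L^2)/|\tau|\big)$ from the vertical displacement of the horizontal curve driven by $(\dot\gamma_1)_{W_0}\in\ker A_\tau$. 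That auxiliary curve is a genuine abnormal curve issuing from $0_\G$, so its endpoint $\tilde g$ lies in $\abn$, and $\deu(g,\tilde g)=O\big((L+L^2)/|\tau|\big)\le C_1|\tau|^{-1}$ since $L\le D$. Hence $\deu(g,\abn)\le C_1|\tau|^{-1}$, i.e.\ $|\tau|\le C_1\delta^{-1}$; feeding this back, and using $|p(1)|=|p(0)-A_\tau\gamma_1(1)|=O(L)$, the final covector has norm $O\big(|p(1)|+|\tau|\big)=O(\delta^{-1})$ in the cometric dual to $\deu$, as required.

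The step I expect to be the main obstacle — and the reason the statement is only conjectural for arbitrary step-$2$ groups, whereas \cref{lippow} is a theorem in the free case — is the uniformity of the constant $C_1$: the estimate on $(\gamma_1(s))_{W_1}$ needs the smallest nonzero singular value of $A_{\hat\tau}$ to be bounded below uniformly as $\hat\tau$ runs over the compact set of unit vertical covectors with $\ker A_{\hat\tau}\ne 0$. For free-Carnot groups of step $2$ the rigidity of the bracket, together with the explicit codimension-$3$ structure of $\abn$, makes this (and the relevant geometry of $\abn$) tractable. For a general surjection $\twowedge V_1\to V_2$ the rank of $A_\tau$ can drop along complicated determinantal subvarieties of $V_2^*$, along which that singular value degenerates; then the auxiliary point $\tilde g$ constructed above sits at distance $O\big(L/(\eta|\tau|)\big)$ from $\abn$ with $\eta\to 0$, and the bound $|\tau|\le C\delta^{-1}$ is lost. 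Overcoming this seems to require replacing the crude splitting $V_1=W_0\oplus W_1$ by an adaptive one and showing that the ``almost abnormal'' directions one then discards still land quantitatively near $\abn$ — in effect a quantitative Sard-type description of the abnormal set, in the spirit of the discussion surrounding \cref{dimabn}. The remaining ingredients — the regularity of the sub-Finsler norm needed for the Hamiltonian and superdifferential statements, and the passage from the pointwise covector bound to a genuine Lipschitz estimate on $\{g\in K:\deu(g,\abn)\ge\delta\}$, obtained by distinguishing pairs of points according to whether their Euclidean distance is below or above a fixed multiple of $\delta$ — are routine.
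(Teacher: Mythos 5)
The statement you address, \cref{conjecture}, is deliberately left open in the paper: the paper proves only the special case \cref{lippow} for \emph{free}-Carnot groups, and your sketch honestly flags that it does not close the general case. That said, two points deserve attention.

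First, your route is genuinely different from the paper's proof of \cref{lippow}. The paper never invokes Pontryagin covectors, Hamiltonian conservation, or superdifferentials. Instead it works with the global geometry of the projected minimizer $\Gamma=\pi(\gamma([0,1]))$: by \cref{abndist}, the hypothesis $\deu(\gamma(1),\abn)\ge\delta$ forces $\Gamma$ out of the $C_4\delta$-tube of every codimension-$2$ subspace; \cref{mindelta} then extracts $n-1$ points on $\Gamma$ with minimal height $\gtrsim\delta$; and \cref{lemmawedge} together with \cref{lemmasize2} converts this into a quantitative way to realize any vertical correction $Z$ by short horizontal detours inserted along the geodesic, giving the one-sided Lipschitz estimate directly by the triangle inequality. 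This is more elementary, requires no smoothness of the sub-Finsler norm (the paper even uses the $\ell^1$ norm in its sharpness example, where your Hamiltonian step needs care), and, crucially, exploits the whole trajectory rather than a single endpoint covector.

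Second, the gap you identify is real, but you understate its reach: the singular-value degeneracy is \emph{not} automatically benign in the free case. In the free group of rank $4$, take $\tau = e_1^*\wedge e_2^*+\eta\,e_3^*\wedge e_4^*$ with $0<\eta\ll 1$. Then $\ker A_\tau=\{0\}$, so your $W_0$ is trivial and your auxiliary point $\tilde g$ is $0_\G$, while the control of $\gamma_1$ in the $e_3,e_4$-directions is only $O\!\left(L/(\eta|\tau|)\right)$, and the bound $|\tau|\lesssim\delta^{-1}$ degenerates as $\eta\to 0$ exactly as in the general-group scenario you flag. The adaptive-splitting / quantitative-Sard ingredient you mention would thus already be needed to recover \cref{lippow} by your method, contrary to the claim that the proposed scheme ``would in particular reprove \cref{lippow}.'' This is precisely the difficulty the paper's minimal-height argument is engineered to avoid. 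In short: the reduction to a final-covector bound is a reasonable program (modulo the norm-regularity caveats you note), but the covector bound itself is open in the same place the paper is open, and even the free case does not follow from your argument as written.
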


If the latter statement were proved, by combining it with \cite[Theorem~1.1]{sard_step2} and the arguments of this article, the Euclidean rectifiability of sub-Finsler spheres will be addressed for all sub-Finsler Carnot groups of step $2$. In complete generality, we do not know if one should expect the following:

\begin{conjecture}
    Metric spheres in sub-Finsler Carnot groups are Euclidean codimension-one rectifiable.
\end{conjecture}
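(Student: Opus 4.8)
The plan is to run exactly the argument behind \cref{rectifiability}, isolating the two places where step-$2$ structure was used and feeding in their higher-step counterparts. That argument has three largely independent components. First, a measure-theoretic skeleton, the content of \cref{section_3}: realizing $\sfcc$ as the graph of $\dcc(0_\G,\cdot)$ over the Euclidean sphere $\sfeu$ by means of the dilations, as in \cite[Section~4.1]{regularity}, and denoting by $\Phi\colon\sfeu\to\sfcc$, $\Phi(g)=\delta_{1/\dcc(0_\G,g)}(g)$, this graph map (a homeomorphism, since $\dcc(0_\G,\cdot)$ is continuous and positive on the compact set $\sfeu$), the area formula expresses $\mathcal{H}^{d-1}_{\mathrm{Riem}}(\sfcc)$ as $\int_{\sfeu}J\Phi$ with the graph Jacobian $J\Phi\lesssim\sqrt{1+\|\nabla\dcc(0_\G,\cdot)\|^2}$. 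Second, a Sard-type bound on the size of the abnormal set $\abn$, playing the role of \cref{dimabn}. Third, a bound on the local Lipschitz constant of $\dcc(0_\G,\cdot)$ in terms of $\deu(\cdot,\abn)$, playing the role of \cref{lippow}. The first component is already available for arbitrary step, so the whole statement reduces to the second and third; and, as the paper's discussion makes clear, the third is where essentially new input is needed.

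For the Sard-type bound: in a Carnot group the endpoint map \eqref{def_endppointmap} is polynomial and $\abn$ is invariant under the anisotropic dilations, so $\abn$ should be a closed semialgebraic cone; assuming a strong Sard property in the sense of \cite{sardproperty} — established in small rank or step, see \cite{sard_step2,codimension}, and consistent with the expectation that $\mathrm{codim}\,\abn\ge 3$ always — one gets $\dim_\G\abn\le d-c$ for some integer $c\ge 2$, and hence, by semialgebraicity, that $\deu$-tubes of $\abn$ have $\mathcal{H}^d$-measure $\lesssim\delta^{c}$. Since $\sfeu$ is a fixed Euclidean sphere crossed transversally by the dilation orbits, $\abn\cap\sfeu$ has dimension $\le d-1-c$ inside $\sfeu$, and its $\delta$-tube inside $\sfeu$ has $\mathcal{H}^{d-1}_{\mathrm{Riem}}$-measure $\lesssim\delta^{c}$.

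The genuine obstacle is the higher-step analogue of \cref{lippow}. Its step-$2$ proof rests on the explicit linear-algebraic description of abnormal curves and of the critical locus of the endpoint map — exactly what is unavailable beyond step $2$, where abnormal extremals are constrained by the Goh conditions and a hierarchy of higher-order necessary conditions, and where length-minimizing controls reaching a nearly-abnormal point carry no a priori bound. The natural route is a quantitative metric-regularity statement for the endpoint map off its singular set $\Sigma$ in control space: along any minimizing control $u$ steering $0_\G$ to $g\in\sfeu$, the differential of the endpoint map should admit a right inverse of operator norm $\lesssim 1/\mathrm{dist}(u,\Sigma)$; one then has to compare $\mathrm{dist}(u,\Sigma)$ with the intrinsic quantity $\deu(g,\abn)$, uniformly over \emph{all} minimizing controls $u$ and all $g\in\sfeu$. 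This last, uniform, comparison — in effect a uniform quantitative form of the higher-order abnormality conditions — is the crux; optimistically the resulting blow-up rate stays $\deu(\cdot,\abn)^{-1}$, as conjectured for step $2$ in \cref{conjecture}, but for what follows any polynomial rate $\delta^{-\alpha}$ with $\alpha<c$ would suffice.

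Granting these two inputs with a rate $\delta^{-\alpha}$, $\alpha<c=\mathrm{codim}_\G\abn$, one closes the argument as in the proof of \cref{rectifiability}. Decompose $\sfeu\setminus\abn$ into the disjoint dyadic annuli $A_n=\set{g\in\sfeu\colon 2^{-n-1}<\deu(g,\abn)\le 2^{-n}}$, $n\ge 0$, which exhaust $\sfeu\setminus\abn$ because $0_\G\in\abn$ forces $\deu(\cdot,\abn)\le 1$ on $\sfeu$. On each $A_n$ the function $\dcc(0_\G,\cdot)$ is Lipschitz with constant $\lesssim 2^{n\alpha}$, so composing $\Phi|_{A_n}$ with countably many smooth charts of $\sfeu$ yields Lipschitz maps $f_n\colon E_n\subseteq\R^{d-1}\to\G$ with $\bigcup_n f_n(E_n)\supseteq\Phi\big(\bigcup_n A_n\big)=\sfcc\setminus(\sfcc\cap\abn)$, the last equality using that $\Phi$ is a homeomorphism and $\abn$ is dilation-invariant; and $\sfcc\cap\abn$ is $\mathcal{H}^{d-1}_{\mathrm{Riem}}$-null because $\dim_\G\abn\le d-c\le d-3$. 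Finally, countable subadditivity and the area formula give $\mathcal{H}^{d-1}_{\mathrm{Riem}}(\sfcc)\le\sum_{n\ge 0}\mathcal{H}^{d-1}_{\mathrm{Riem}}(\Phi(A_n))\lesssim\sum_{n\ge 0}2^{n\alpha}\,\mathcal{H}^{d-1}_{\mathrm{Riem}}(A_n)\lesssim\sum_{n\ge 0}2^{n(\alpha-c)}<\infty$, which together with the Lipschitz parametrizations above is precisely the Euclidean $(d-1)$-rectifiability of $\sfcc$. In short, the conjecture collapses to the two analytic statements above, the delicate one being the uniform Lipschitz estimate generalizing \cref{lippow}.
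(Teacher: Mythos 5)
This statement is labeled a \emph{conjecture} in the paper, and the authors are explicit that they do not have a proof of it (``In complete generality, we do not know if one should expect the following''). There is thus no ``paper's own proof'' against which to compare, and your proposal, read closely, does not supply one either: it is a conditional reduction, not an argument. You correctly identify that the measure-theoretic skeleton of \cref{section_3} (graph parametrization over $\sfeu$, the area estimate of \cref{areaformula}, the dyadic decomposition of $\sfeu\setminus\abn$, and the summability computation) transfers to arbitrary Carnot groups, and you correctly isolate the two missing inputs: an analogue of \cref{dimabn} (quantitative codimension bound for $\abn$ together with a Minkowski-content estimate for its tubular neighbourhoods) and an analogue of \cref{lippow} (a uniform bound on the local Lipschitz constant of $\dcc(0_\G,\cdot)$ in terms of the Euclidean distance to $\abn$). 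But you then \emph{grant} both, and both are genuinely open: the strong Sard property beyond small rank/step is unknown (indeed, it is not even established that $\abn$ is semialgebraic or has codimension $\ge 2$ in general), and the Lipschitz blow-up bound is open even for step-$2$ non-free groups, which is precisely the content of the paper's \cref{conjecture}. Your paragraph on ``quantitative metric regularity'' of the endpoint map off its singular set, and the needed uniform comparison between $\mathrm{dist}(u,\Sigma)$ in control space and $\deu(g,\abn)$ in the group, describes a plausible strategy but contains no argument at all; the paper gives no such comparison even in the free step-$2$ case, where \cref{lippow} is instead proved through the explicit algebraic description \eqref{abnormalset} of $\abn$ and the wedge estimates of \cref{lemmawedge,lemmasize2}, machinery that has no known analogue in higher step.

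Two smaller points. First, you write ``$\dim_\G\abn\le d-c\le d-3$'' after having only assumed $c\ge 2$; this is a slip (with $c\ge 2$ one gets $\le d-2$, which still suffices for $\mathcal{H}^{d-1}$-nullity and for convergence of the dyadic sum with $\alpha=1$, so the conclusion is unaffected, but the inequality as written does not follow from your hypothesis). Second, your claimed tube estimate $\mathcal{H}^{d}(\set{x:\deu(x,\abn)\le\delta})\lesssim\delta^{c}$ relies on upper Minkowski content bounds for $\abn$; in the paper this is obtained via \cref{Minkowski_content} from (semi)algebraicity and \cite[Theorem~3.2.39]{Federer}, and in general step this regularity of $\abn$ is itself part of what a strong Sard theorem would need to deliver, not something to be assumed separately. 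In sum: your proposal accurately reproduces the paper's own roadmap (the remarks following \cref{conjecture}) and its honest assessment of what is missing, but it does not close either gap and is therefore not a proof of the conjecture.
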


{\em Acknowledgments.} We are grateful to E. Hakavuori and C. Bodart for several interesting discussions and opinions on the approach to this problem.

\section{Sub-Finsler free-Carnot groups of step 2} \label{setting}

A \emph{Carnot group} is a simply connected Lie group with stratifiable Lie algebra. We recall that a real Lie algebra $\g$ is \emph{stratifiable of step $s$} if it admits a direct sum decomposition
\begin{equation*}
    \g = \g_1 \oplus \dots \oplus \g_s \, ,
\end{equation*}
into linear spaces such that $[\g_1,\g_i]=\g_{i+1}$ for every $i=1,\dots,s-1$, $[\g_1,\g_s]=\set{0}$, and $\g_s \neq 0$. Such a decomposition is called \emph{stratification}. Free-nilpotent Lie algebras of step $s$ on finite sets are examples of stratifiable Lie algebras. We call \emph{free-Carnot group of step $s$ and rank $n$} a Carnot group with free-nilpotent Lie algebra of step $s$ on a set with $n$ elements, they are unique up to isomorphism of Lie groups.

We consider the following model for a free-Carnot group of step $2$ and rank $n$: 
\begin{equation}\label{def_G}
\G\coloneq \left(\R^n \times \twowedge (\R^n), *\right) ,
\end{equation} where $\twowedge (\R^n)$ is the space of skew-symmetric contravariant tensors on $\R^n$ of rank $2$, i.e., the vector space generated by the elements of the form $v \wedge w \coloneq v \otimes w - w \otimes v$, with $v$ and $w$ ranging in $\R^n$. Hereafter we will refer to the elements of $\G$ both as sums $x+Y$ and pairs $(x,Y)$ where $x \in \R^n$ and $Y \in \twowedge (\R^n)$. The identification between $\R^n \oplus \twowedge (\R^n)$ and $\R^n \times \twowedge (\R^n)$ remains understood. The product law $*$ that we consider in \eqref{def_G} is
\begin{equation} \label{prod}
   (x_1, Y_1) * (x_2, Y_2) \coloneq \left(x_1+x_2,Y_1+Y_2+\tfrac{1}{2}(x_1\wedge x_2) \right).
\end{equation} 
Such a nilpotent Lie group has Lie algebra $\galg\coloneq(\R^n \times \twowedge (\R^n),[\cdot,\cdot])$ given by Lie bracket $[(v_1,\omega_1),(v_1,\omega_2)]=(0,v_1 \wedge v_2)$ for every $v_1,v_2 \in \R^n$ and $\omega_1,\omega_2 \in \twowedge(\R^n)$. This Lie algebra is free-nilpotent of step $2$ and generated by the standard basis of $\R^n$. A stratification $\g=\g_1 \oplus \g_2$ is given by $$\g_1\coloneq\R^n \qquad \text{ and } \qquad\g_2\coloneq\twowedge (\R^n).$$ We denote the identity element of the group $\G$ by $0_\G \coloneq (0,0)$. We stress that is this framework, the exponential map $\exp \colon \g \to \G$ is the identity map on $\R^n \times \twowedge (\R^n)$. We introduce the projection map 
\begin{equation*}
    \pi \colon \G \to \R^n , \qquad \pi(x,Y) \coloneq x,
\end{equation*}
which is a Lie group homomorphism $\pi \colon (\G,*) \to (\R^n,+)$.

For every $g=(x,Y) \in \G$, the left-translation map $L_g \colon \G_n \to \G_n$ is defined by $L_g(h)\coloneq g*h$. Its differential at the identity is the map $\de L_g \colon T_{0_\G}\G= \g \to T_g\G$ such that, for every $(v,\omega) \in \g$, with $v \in \R^n$ and $\omega \in \twowedge (\R^n)$, it holds
\begin{equation*}
    \de L_g (v,\omega) \stackrel{\eqref{prod}}{=} \frac{\de}{\de t} \left(x+tv, Y+t\omega+\tfrac{1}{2}(x \wedge tv)\right)\bigg|_{t=0} = \left(v,\omega+\tfrac{1}{2}(x\wedge v)\right) .
\end{equation*}
We consider $\g_1$ as a subspace of the tangent space $T_{0_\G}\G$ of $\G$ at $0_\G$ and on the distribution
\begin{equation*}
    \Delta \coloneq \bigcup_{g \in \G} \de L_g (\g_1) \subseteq T\G ,
\end{equation*}
we fix a left-invariant norm $\lVert \cdot \rVert_\mathrm{sf}$, i.e., $\lVert \de L_g v \rVert_\mathrm{sf}=\lVert v \rVert_\mathrm{sf}$ for every $g \in\G$ and $v \in \g_1$. We refer to $\Delta$ as the \emph{horizontal distribution} of $\G$. 
We refer to $\lVert \cdot \rVert_\mathrm{sf}$ as a \emph{Carnot sub-Finsler structure} on $\G$ and to such a couple $(\G,\lVert \cdot \rVert_\mathrm{sf})$, with $\G$ an in \eqref{def_G} with product \eqref{prod}, as
a {\em sub-Finsler free-Carnot group of step $2$ (and rank $n$)}. An absolutely continuous curve $\gamma \colon [0,1] \to \G$  is said to be {\em horizontal} if $\dot{\gamma}(t) \in \Delta$ for almost every $t \in [0,1]$.

\begin{remark}
In every sub-Finsler free-Carnot group
$(\G,\lVert \cdot \rVert_\mathrm{sf})$  of step $2$, 
there is a correspondence between horizontal curves from the origin  and elements of the space $L^1\coloneq L^1([0,1],\g_1)$ of integrable curves into the first stratum of $\G$. Namely, on the one hand, for every $u \in   L^1$ we  define the {\em horizontal curve $\gamma_u \colon [0,1] \to \G$ associated to the control $u$} as the unique solution to the differential equation
\begin{equation*}
\begin{cases}
    \gamma(0) = 0_\G \, ,\\
    \dot{\gamma_u}(t) = \de L_{\gamma(t)} \, u(t)\, , \quad \text{for a.e.\ $t \in [0,1]$}\, . 
\end{cases}
\end{equation*}
On the other hand, to each horizontal curve $\gamma \colon [0,1] \to \G$ such that $\gamma(0) = 0_\G$, we  define  the {\em control   $u \in L^1 $ associated to $\gamma$}   by
\begin{equation*}
    u(t) \coloneq \de L_{\gamma(t)^{-1}} \dot{\gamma}(t) \in T_{0_\G}\G \, , \quad \text{for a.e.\ $t \in [0,1]$.}
\end{equation*}
The product law \eqref{prod} allows us to write explicitly the equation for a curve of given control $u$:
\begin{equation} \label{curve}
  \gamma_u(t) = \int^t_0 u(s) \, \de s + \frac{1}{2} \int^t_0  \int^s_0 u(\tau) \,  \de \tau  \wedge u(s) \, \de s \, .  
\end{equation}
We define the length $\ell(\gamma)$ of a horizontal curve $\gamma \colon [0,1] \to \G$ of control $u$ as
\begin{equation*}
    \ell(\gamma) \coloneq \int^1_0 \lVert \dot{\gamma}(t) \rVert_\mathrm{sf} \,  \de t = \int^1_0 \lVert u(t) \rVert_\mathrm{sf} \, \de t \, .
\end{equation*}
We remark that $\ell(\gamma)$ is invariant under reparametrization of the interval $[0,1]$.
\end{remark}

We now define the so called \emph{Carnot-Carathéodory distance $\dcc$} on $\G$ induced by the sub-Finsler structure $\lVert \cdot \rVert_\mathrm{sf}$ as 
\begin{equation} \label{ccdistance}
    \dcc(g,h) \coloneq \inf\Set{ \ell(\gamma) \, : \, \text{$\gamma$ horizontal curve with $g,h \in \gamma([0,1])$}} ,
\end{equation}
which   induces the   manifold topology thanks to the Chow–Rashevskii theorem, \cite{Montgomery}. 

\begin{remark}
    Since $\lVert \cdot \rVert_\mathrm{sf}$ is left invariant, the distance $\dcc$ is left-invariant as well, i.e.,
    \begin{equation*}
        \dcc(h*g_1,h*g_2)=\dcc(g_1,g_2) \, , \quad \text{for every $g_1,g_2,h \in \G$}.
    \end{equation*}
\end{remark}

\begin{definition}
    Let $\G$ be a sub-Finsler free-Carnot group of step $2$ and let $g,h \in G$ be two points in $\G$. A \emph{geodesic between $g$ and $h$} is a horizontal curve $\gamma : [0,1] \to\G$ that is length-minimizing from $g$ to $h$, i.e., $\gamma(0)=g, \gamma(1)=h$ and 
    \begin{equation*}
        \dcc(g,h) = \ell(\gamma) \, .
    \end{equation*}
\end{definition}

We stress that the completeness of $\G$ ensure the existence of a geodesic between every two elements of $\G$, see \cite{Montgomery}. This means that the infimum in \eqref{ccdistance} is always attained.

In this paper we investigate the rectifiability of the sub-Finsler unit sphere at the origin:
\begin{equation*}
    \Sf_{\mathrm{cc}}\coloneq\set{ g \in \G \, | \, \dcc(0_\G,g)=1 } \, 
\end{equation*}
with respect to a Riemannian distance on $\G$. Since any two Riemannian metrics are bi-Lipschitz equivalent on compact sets, the result does not depend on the choice of the Riemannian distance. It turns out that, in our argument, it is convenient to consider on $\G$, seen as the vector space $\R^n \times \twowedge (\R^n)$, Euclidean distances given by specific scalar products, as we next introduce.
\begin{definition} \label{def_adequate}
    Let $\G=\R^n \times \twowedge (\R^n)$ be a free-Carnot group of step $2$. A scalar product $\langle \cdot , \cdot \rangle$ on $\G$ is said to be \emph{adequate} whenever $\R^n$ and $\twowedge (\R^n)$ are orthogonal with respect to $\langle \cdot , \cdot \rangle$ and 
    \begin{equation} \label{adequate}
       \langle v_1 \wedge w_1 , v_2 \wedge w_2 \rangle = \langle v_1, v_2 \rangle \cdot \langle w_1 , w_2 \rangle -\langle v_1 , w_2 \rangle \cdot \langle w_1 , v_2  \rangle \, ,
    \end{equation}
for every $v_1,v_2,w_1,w_2 \in \R^n$.    
\end{definition}
We observe that, on the one hand, every adequate scalar product on $\G=\R^n \times \twowedge (\R^n)$ is uniquely determined by its restriction to $\R^n$. On the other hand, we claim that every scalar product on $\R^n$ can be extended to an adequate scalar product on $\R^n \times \twowedge (\R^n)$. Indeed, formula \eqref{adequate} defines a bilinear form on $\twowedge (\R^n)$, which  is positively defined by Cauchy-Schwarz inequality:
    \begin{equation*} 
         \langle v\wedge w , v\wedge w \rangle = \langle v,v \rangle\cdot \langle w , w \rangle  - \langle v,w \rangle^2 \geq  0 ,
    \end{equation*}
    and the equality holds if and only if $v$ is parallel to $w$, that is $v\wedge w=0$.
    
Hereafter $\lVert \cdot \rVert_{\mathrm{eu}}$ will denote an Euclidean norm on $\G$ induced by an adequate scalar product.
The function $\deu$ represents the distance with respect to this Euclidean norm, and 
\begin{equation}\label{def_seu}
    \sfeu\coloneq\set{ g \in \G \, | \, \deu(0_\G,g)=1 }
\end{equation} is the corresponding unit sphere centered at $0_\G$.

On $\R^n$ the   norms $\lVert \cdot \rVert_{\mathrm{sf}}$ and $\lVert \cdot \rVert_{\mathrm{eu}}$
are equivalent, in the sense that there is a constant $C_1>0$ such that 
\begin{equation} \label{constantc1}
C_1^{-1} \lVert v \rVert_\mathrm{eu} \le \lVert v \rVert_\mathrm{sf} \le C_1  \lVert v \rVert_\mathrm{eu} \, ,\qquad \text{for every $v \in \R^n$.}   
\end{equation} Moreover,
since for $v \in \R^n$ the curve $t\mapsto tv$ is horizontal, we obtain
\begin{equation} \label{c1_estiamate}
    \dcc(0_\G,v) \le C_1\lVert v \rVert_\mathrm{eu} \, , \qquad \text{for every $v \in \R^n$.} \\
\end{equation}
Finally, since $\dcc$ is continuous with respect to the manifold topology, it has a minimum and a maximum on the compact set $\sfeu$. Therefore it exists $M>0$ such that \begin{equation} \label{costantM}
   M^{-1} \le\dcc(0_\G,g)\le M \, , \qquad \text{for every $g \in \sfeu$.}
\end{equation}

We conclude this section by discussing some properties of adequate scalar products.
\begin{remark} 
    For every adequate scalar product $\langle \cdot , \cdot \rangle$ on $\R^n \times \twowedge (\R^n)$, we claim that  
    \begin{equation} \label{wedge_norm_ineq}
        \lVert v \wedge w \rVert_\mathrm{eu} \le \lVert v \rVert_\mathrm{eu} \cdot \lVert w \rVert_\mathrm{eu} , \quad\text{ for every } v,w \in \R^n,
    \end{equation}
and the equality holds if and only if $v$ and $w$ are orthogonal. Indeed, by \eqref{adequate}, we compute    
    \begin{equation*} 
        \lVert v \wedge w \rVert_\mathrm{eu}^2 = \langle v\wedge w , v\wedge w \rangle = \langle v,v \rangle\cdot \langle w , w \rangle  - \langle v,w \rangle^2 \le \lVert v \rVert_\mathrm{eu}^2 \cdot \lVert w \rVert_\mathrm{eu}^2 \, ,
    \end{equation*}
and the latter estimate is an equality if and only if $\langle v,w \rangle=0$.
\end{remark}

\begin{remark} \label{wedgeorthogonality}
    Let $\langle \cdot , \cdot \rangle$ be an adequate scalar product on $\R^n \times \twowedge (\R^n)$, hereafter $\perp$ will denote its corresponding orthogonality relation. Moreover, if $\mathcal{P} \le \R^n$ and $\mathcal{W} \le \twowedge (\R^n)$ are linear subspaces, we use the notation:
    \begin{equation*}
        \mathcal{P}^{\perp_1}\coloneq \mathcal{P}^\perp \cap \R^n, \qquad 
        \mathcal{W}^{\perp_2}\coloneq \mathcal{W}^\perp \cap \twowedge (\R^n) .
    \end{equation*}
We claim that if $v,w \in \R^n$ and $\mathcal{P} \le \R^n$ is such that
at least one between $v$ and $w$ belongs to $\mathcal{P}^{\perp_1}$, then $v \wedge w \in \left( \twowedge (\mathcal{P})\right)^{\perp_2}$. Indeed, if $x_1,x_2 \in \mathcal{P}$ and $v \in \mathcal{P}^{\perp_1}$, then
\begin{equation*}
    \langle v \wedge w, x_1 \wedge x_2 \rangle = \langle v , x_1  \rangle \cdot \langle w , x_2  \rangle - \langle v , x_2  \rangle \cdot \langle w , x_1  \rangle = 0 \, .
\end{equation*}
The same argument holds if $w \in \mathcal{P}^{\perp_1}$.
\end{remark}

\section{The sphere as graph} \label{section_3}

We consider the group $\G$ as in \eqref{def_G}, with product \eqref{prod}, and $t \in \R$. We define \emph{the dilation of factor $t$} as the map
\begin{equation*}
    \delta_t \colon \G \to \G \, , \qquad (x,Y) \mapsto (tx,t^2Y) \, .
\end{equation*}
One can immediately check that $\delta_t$ is a Lie group isomorphism for every $t \neq 0$. Moreover, once a Carnot sub-Finsler structure $\lVert \cdot \rVert_\mathrm{sf}$ is fixed on $\G$, the map $\delta_t$ is a metric dilation of factor $t$ with respect to the Carnot-Carathéodory distance \eqref{ccdistance}, i.e.,
\begin{equation} \label{metric_dilation}
  \dcc(\delta_t(g),\delta_t(h))=t\dcc(g,h)\, , \qquad \text{for every $g,h\in\G$ and $t\in\R$.}  
\end{equation}
For every $g \in \G$, we define \emph{the dilation vector field} as 
\begin{equation} \label{dilation_flow}
    \widehat{\delta}(g) \coloneq \ddt  \delta_t(g) \bigg|_{t=1} \in T_g \G \, , \quad \text{for every $g \in \G$.}
\end{equation}

\begin{remark} \label{dilation_transverse}
Fix on $\G$ an adequate scalar product. We claim that the vector field $g \mapsto \widehat{\delta}(g)$ is 
transverse to the foliation of $\G \setminus \set{0}$ given by:
\begin{equation*}
    \Set{ \sfeu(r) }_{r \in (0,+\infty)} \, ,
\end{equation*}
where $\sfeu(r)$ is the sphere centered at $0_\G$ of radius $r$ with respect to the distance $\deu$ induced by the scalar product. Indeed, by computing the dilation vector field at $g=(x,Y)$ in coordinates, we get
\begin{equation*}
    \widehat{\delta}(x,Y) = \ddt  \delta_t(x,Y) \bigg|_{t=1} = \ddt (tx,t^2Y) \bigg|_{t=1} = (x,2Y) \, .
\end{equation*}
From \cref{def_adequate}, we recall that $\R^n$ and $\twowedge (\R^n)$ are orthogonal. We then compute the scalar product between $\widehat{\delta}(g)$ and the vector orthogonal to $T_g\sfeu$, namely $$\braket{(x,Y),(x,2Y)}_{\mathrm{eu}}= \lVert x \rVert^2_{\mathrm{eu}}+2\lVert Y \rVert^2_{\mathrm{eu}},$$ which is not null as long as $(x,Y) \neq 0_\G$. We conclude that $\widehat{\delta}(g)$ is transverse to $T_g\sfeu$ for every $g \in \G \setminus \set{0}$.
\end{remark}

\begin{remark} \label{diffeophi}
We claim that the map defined by
\begin{equation} \label{def_phi}
    \phi \colon \sfeu \times (0,+\infty) \to \G \setminus \set{0} \, , \qquad (p,t) \mapsto \delta_{t}^{-1}(p) \, .
\end{equation}
is a global diffeomorphism. Indeed, for every $g \in \G \setminus \set{0}$, the set
\begin{equation} \label{dilation_radius}
    r_g \coloneq \set{ \delta_t(g) \, \colon \, t \in (0,+\infty)}
\end{equation}
is connected, it has $0_\G$ as an accumulation point, and it escapes every compact set of $\G$. This implies that $r_g \cap \sfeu$ is non-empty and therefore $g$ is in the image of $\phi$, this proves $\phi$ is surjective. Moreover,
the map $t \mapsto \lVert \delta_t(g)\rVert_\mathrm{eu}$ is smooth from $(0,+\infty)$ to $\R$ and by \cref{dilation_transverse} its derivative is never zero, therefore it is injective. This implies that $r_g \cap \sfeu$ has at most one element, hence $\phi$ is injective. Finally, $\phi$ is a local diffeomorphism by \cref{dilation_transverse}.
\end{remark}


\begin{remark} \label{sphere_graph}
    Let $S \coloneq \set{(p,d_0(p)) \, | \, p \in \sfeu}$ be the graph of the function $d_0(\cdot) \coloneq \dcc(0_\G, \cdot)$ restricted to $\sfeu$. We claim that, for $\phi$ as in \eqref{diffeophi}, one has
\begin{equation*}
    \sfcc=\phi(S) \, .
\end{equation*}
Indeed, by \cref{diffeophi}, each set $r_g$, defined in \eqref{dilation_radius}, has precisely one point in $\sfeu$ and, since $\delta_t$ is a metric dilation of factor $t$ that fixes $0_\G$, also one point in $\sfcc$. Moreover, for every $p \in \sfeu$, we have 
\begin{equation*}
    \dcc(0_\G, \phi(p,d_0(p)) \stackrel{\eqref{metric_dilation}}{=} d_0(p)^{-1}\cdot \dcc(0_\G,p) =1 \, .
\end{equation*}
This implies that the map $p \mapsto \phi(p,\dcc(p))$ is a bijection from $\sfeu$ to $\sfcc$.
\end{remark}


Hereafter, if $f$ is a Lipschitz function between metric spaces, the number $\mathrm{Lip}(f)$ will denote the Lipschitz constant of $f$. As already mentioned, the goal of this paper is to study the rectifiability of $\sfcc$. Since we have expressed it as the graph of a function, this motivates the need for the following result.

\begin{proposition} \label{areaformula}
    Let the group $\G$ be as in \eqref{def_G}, the set $\sfeu$ as in \eqref{def_seu} and the map $\phi$ as in \eqref{def_phi}. Denote by $d$ the topological dimension of $\G$ and by $d_0(\cdot)$ the function $\dcc(0_\G, \cdot)$. There exists a constant $C_2$ such that for every Borel set $E \subseteq \sfeu$, it holds
    \begin{equation*}
        \mathcal{H}^{d-1}_{\mathrm{eu}}(\phi(\mathrm{gr} \, d_0|_E)) \le C_2 \left( \mathrm{Lip}(d_0|_E) + 1\right)\mathcal{H}^{d-1}_{\mathrm{eu}}(E) \, ,
    \end{equation*}
    where $\mathrm{gr} \, d_0|_E \coloneq \set{(x,d_0(x)) \, | \, x \in E} \subseteq \sfeu \times (0,+\infty)$, the measure $\mathcal{H}_\mathrm{eu}$ is the Hausdorff measure with respect to $\deu$, and on $\sfeu$ we consider the distance function given by the restriction of $\deu$.
\end{proposition}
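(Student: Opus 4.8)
The plan is to realize the set $\phi(\mathrm{gr}\, d_0|_E)$ as the image of $E$ under the graph map over $E$ followed by $\phi$, and to estimate the two contributions separately. The graph map stretches the $(d-1)$-dimensional Hausdorff measure only \emph{linearly} in $\mathrm{Lip}(d_0|_E)$ -- this is where the elementary inequality $\sqrt{1+L^2}\le 1+L$ is used -- while $\phi$, being smooth on the relevant compact set, contributes only a fixed multiplicative constant. Applying instead the naive Lipschitz estimate directly to the single map $p\mapsto\phi(p,d_0(p))$ would only give a factor $(\mathrm{Lip}(d_0|_E)+1)^{d-1}$; recovering the linear dependence of the statement is the one delicate point, and it is precisely what forces one to isolate the graph map, whose tangential Jacobian turns out to be $\sqrt{1+\lVert\nabla^{\sfeu}d_0\rVert^2}$ rather than an operator norm raised to the power $d-1$.

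I would first pass to the Euclidean picture. Writing $\G=\R^n\times\twowedge(\R^n)\cong\R^d$, the map $\phi(p,t)=\delta_t^{-1}(p)=\delta_{1/t}(p)$ is the restriction to $\sfeu\times(0,+\infty)$ of the smooth map $\widetilde\phi\colon\R^d\times(0,+\infty)\to\R^d$, $\widetilde\phi(z,t)\coloneq\delta_{1/t}(z)$. By \eqref{costantM} the function $d_0$ takes values in $[M^{-1},M]$ on $\sfeu$, hence $\mathrm{gr}\, d_0|_E\subseteq K\coloneq\sfeu\times[M^{-1},M]$, a compact set bounded away from $\set{t=0}$, on which $\widetilde\phi$ is therefore Lipschitz with some constant $C$ depending only on $n$, $M$ and the fixed adequate scalar product. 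Since a Lipschitz map increases the $(d-1)$-dimensional Hausdorff measure by a factor at most $(\text{Lipschitz constant})^{d-1}$, it suffices to prove
\begin{equation*}
    \mathcal{H}^{d-1}_{\mathrm{eu}}\!\left(\mathrm{gr}\, d_0|_E\right)\le\left(\mathrm{Lip}(d_0|_E)+1\right)\mathcal{H}^{d-1}_{\mathrm{eu}}(E),
\end{equation*}
where $\mathrm{gr}\, d_0|_E$ is viewed inside $\R^d\times\R$ with the product Euclidean structure (i.e.\ $\deu$ on the first factor and the standard metric on the second); then $C_2\coloneq C^{d-1}$ works. Here one uses that the $(d-1)$-dimensional Hausdorff measure of a subset of $\sfeu$ depends only on the ambient Euclidean distance, so that the measure on the right coincides with the one computed with the restricted distance on $\sfeu$.

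To prove the displayed estimate, I may assume $L\coloneq\mathrm{Lip}(d_0|_E)<\infty$, since otherwise there is nothing to show. By the McShane extension theorem, extend $d_0|_E$ to an $L$-Lipschitz function $\bar g\colon\sfeu\to\R$ for the distance induced by $\deu$; note $\mathrm{gr}\, d_0|_E=\set{(p,\bar g(p))\colon p\in E}$. The graph map $\iota\colon\sfeu\to\R^d\times\R$, $\iota(p)\coloneq(p,\bar g(p))$, is injective, is $\sqrt{1+L^2}$-Lipschitz because $\lVert\iota(p)-\iota(q)\rVert^2=\lVert p-q\rVert^2+|\bar g(p)-\bar g(q)|^2$, and is differentiable $\mathcal{H}^{d-1}$-almost everywhere on $\sfeu$ by Rademacher's theorem on the compact smooth manifold $\sfeu$. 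The area formula for Lipschitz maps on $\sfeu$ -- which follows from the Euclidean area formula by localizing in finitely many smooth charts and using the injectivity of $\iota$ -- then gives
\begin{equation*}
    \mathcal{H}^{d-1}_{\mathrm{eu}}(\iota(E))=\int_E J^{\sfeu}\iota\,\de\mathcal{H}^{d-1}_{\mathrm{eu}},
\end{equation*}
where $J^{\sfeu}\iota(p)$ is the $(d-1)$-Jacobian of the restriction of $\de\iota_p$ to $T_p\sfeu$. Since $\de\iota_p(v)=(v,\langle\nabla^{\sfeu}\bar g(p),v\rangle)$ for $v\in T_p\sfeu$, a suitable orthonormal basis of $T_p\sfeu$ (one vector parallel to $\nabla^{\sfeu}\bar g(p)$, the rest orthogonal to it) is mapped by $\de\iota_p$ to pairwise orthogonal vectors, whence $J^{\sfeu}\iota(p)=\sqrt{1+\lVert\nabla^{\sfeu}\bar g(p)\rVert^2}$; and since $\bar g$ is $L$-Lipschitz for $\deu$, which agrees to first order with the intrinsic distance on $\sfeu$, one has $\lVert\nabla^{\sfeu}\bar g(p)\rVert\le L$ for almost every $p$. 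Therefore $J^{\sfeu}\iota(p)\le\sqrt{1+L^2}\le 1+L$, and integrating over $E$ yields the estimate.

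All the remaining ingredients are routine: the Lipschitzness of $\widetilde\phi$ on the compact set $K$, the behaviour of Hausdorff measure under Lipschitz maps, the reduction of the area formula on $\sfeu$ to local charts, and the identification of the Hausdorff measures computed with the various bi-Lipschitz-equivalent distances on $\sfeu$. As emphasized above, the genuine obstacle is obtaining linear rather than polynomial dependence on $\mathrm{Lip}(d_0|_E)$, which is exactly what the tangential-Jacobian computation for the graph map provides.
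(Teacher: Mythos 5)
Your proof is correct and follows the same strategy as the paper: first absorb the diffeomorphism $\phi$ into a fixed Lipschitz constant on the compact set $\sfeu\times[M^{-1},M]$, then obtain the linear-in-$\mathrm{Lip}(d_0|_E)$ bound for the graph map. The only difference is cosmetic: you apply the area formula with the tangential Jacobian $\sqrt{1+\lVert\nabla^{\sfeu}\bar g\rVert^2}$ directly on $\sfeu$ after a McShane extension, whereas the paper pulls back to finitely many bi-Lipschitz charts of $\sfeu$ and applies the elementary graph-area estimate of \cref{area_formula_real} chart by chart; the two computations are the same in substance, with your route even producing a marginally cleaner constant since the chart Lipschitz factors cancel in the change-of-variables.
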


To prove \cref{areaformula}, we recall the following corollary of the area formula \cite[Theorem~3.2.3]{Federer} applied to graph functions.

\begin{corollary} \label{area_formula_real}
Let $A \subseteq \R^d$ be a Borel set and $\psi \colon A \subseteq \R^d \to \R$ be a Lipschitz function. Then 
    \begin{equation*}
        \mathcal{H}^d(\mathrm{gr} \, \psi) \le \left( \mathrm{Lip}(\psi) + 1\right)\mathcal{H}^d(A) \, ,
    \end{equation*}
    where $\mathrm{gr} \, \psi \coloneq \set{(x,\psi(x)) \, | \, x \in A} \subseteq \R^{d+1}$ and $\mathcal{H}^d$ is the Hausdorff measure on $\R^d$ and $\R^{d+1}$. 
\end{corollary}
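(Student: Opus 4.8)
The plan is to realise $\mathrm{gr}\,\psi$ as the image of $A$ under the graph map and to apply Federer's area formula \cite[Theorem~3.2.3]{Federer}, the only substantive point being an elementary bound on the relevant Jacobian.

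First I would extend $\psi$ to all of $\R^d$: by the McShane extension theorem the scalar Lipschitz function $\psi$ admits an extension $\bar\psi\colon\R^d\to\R$ with $\mathrm{Lip}(\bar\psi)=\mathrm{Lip}(\psi)$. Setting
$$F\colon\R^d\to\R^{d+1},\qquad F(x)\coloneq(x,\bar\psi(x)),$$
the map $F$ is Lipschitz, it is injective since its first $d$ components form the identity of $\R^d$, and $F(A)=\mathrm{gr}\,\psi$ because $\bar\psi$ agrees with $\psi$ on $A$.

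Next I would estimate the $d$-dimensional Jacobian of $F$. By Rademacher's theorem $\bar\psi$, and hence $F$, is differentiable $\mathcal{L}^d$-almost everywhere, with $DF(x)^\top DF(x)=I_d+v v^\top$, where $v=\nabla\bar\psi(x)$. From the elementary identity $\det(I_d+vv^\top)=1+|v|^2$ we get
$$J_F(x)=\sqrt{\det\!\big(DF(x)^\top DF(x)\big)}=\sqrt{1+|\nabla\bar\psi(x)|^2}\le\sqrt{1+\mathrm{Lip}(\psi)^2}\le\mathrm{Lip}(\psi)+1,$$
using $|\nabla\bar\psi|\le\mathrm{Lip}(\bar\psi)=\mathrm{Lip}(\psi)$ wherever the gradient exists.

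Finally I would apply the area formula to $F$ on the $\mathcal{L}^d$-measurable set $A$. Since $F$ is injective, the multiplicity function is $\mathbf 1_{F(A)}$, so the area formula reduces to the equality $\mathcal{H}^d(F(A))=\int_A J_F\,\de\mathcal{L}^d$; combining this with the Jacobian bound, with $F(A)=\mathrm{gr}\,\psi$, and with the coincidence $\mathcal{H}^d=\mathcal{L}^d$ on $\R^d$ yields
$$\mathcal{H}^d(\mathrm{gr}\,\psi)=\int_A J_F(x)\,\de\mathcal{L}^d(x)\le(\mathrm{Lip}(\psi)+1)\,\mathcal{L}^d(A)=(\mathrm{Lip}(\psi)+1)\,\mathcal{H}^d(A).$$
There is no genuine obstacle here: the two points deserving a line of care are the injectivity of $F$, which turns the area formula into an equality rather than a mere inequality, and the Jacobian identity $\det(I_d+vv^\top)=1+|v|^2$, both of which are routine.
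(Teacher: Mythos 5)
Your proof is correct and is exactly the argument the paper has in mind: the paper simply cites Federer's area formula applied to graph functions without spelling out the details, and you have filled those in (McShane extension, the Jacobian computation $J_F=\sqrt{1+|\nabla\bar\psi|^2}\le\mathrm{Lip}(\psi)+1$, and injectivity of the graph map) in the standard way.
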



\begin{proof}[Proof of \cref{areaformula}] This is a simple exercise, which we spell out for completeness. Fix a Borel set $E \subseteq \sfeu$. We recall from \eqref{costantM} that, for some $M>0$, we have that $M^{-1} \le\dcc(0_\G,g)\le M$ for every $g \in \sfeu$. In particular $\mathrm{gr} \, d_0|_E \subseteq \sfeu \times [M^{-1},M]$. By \cref{diffeophi}, the map $\phi \colon \sfeu \times (0,+\infty) \to \G\setminus\set{0}$ is a diffeomorphism, therefore $\phi$ is Lipschitz on compact sets. We define $L_1$ to be the Lipschitz constant of $\phi$ when restricted to $\sfeu \times [M^{-1},M]$. We recall that $(\sfeu,\deu)$ is locally bi-Lipschitz equivalent to $\R^{d-1}$. By the compactness of $\sfeu$, we can then consider a finite collection of closed subsets $B_1,\dots,B_N \subseteq \R^{d-1}$ and bi-Lipschitz maps $h_i \colon B_i \to h_i(B_i) \subseteq \sfeu$ such that $$\bigcup_{i=1}^N \, h_i(B_i) = \sfeu\, .$$ 
Let $L_2 \ge 1$ be a number greater than $\mathrm{Lip}(h_i)$ and $\mathrm{Lip}(h_i^{-1})$ for every $1 \le i \le N$.
For every $1 \le i \le N$, we consider the functions defined as $\psi_i \coloneq d_0 \circ h_i$ from $B_i$ to $[M^{-1},M]$. We consider on $\sfeu \times (0,+\infty)$ the product metric $\sqrt{\deu^2 + \lvert \cdot \rvert^2}$ and the Hausdorff measure $\mathcal{H}^d$ with respect to that metric. We observe that the maps defined as
\begin{equation*}
    \Psi_i \colon  \mathrm{gr} \, \psi_i \to \mathrm{gr} \, d_0|_{h_i(B_i)} \, , \qquad
    (x,\psi_i(x)) \mapsto (h_i(x),\psi_i(x)) \, ,
\end{equation*}
are $L_2$-Lipschitz.

Let $A_1,\dots,A_N \subseteq \R^{d-1}$ be Borel sets such that $A_i \subseteq B_i$ for every $1 \le i \le N$ and
\begin{equation*}
    E = \bigsqcup_{i=1}^N h_i(A_i) \, .
\end{equation*}
Finally, we can combine the previous discussion with \cref{area_formula_real} to compute
\begin{align*}
\mathcal{H}^{d-1}_{\mathrm{eu}}(\phi&(\mathrm{gr} \, d_0|_E)) \le L_1^{d-1} \mathcal{H}^{d-1}(\mathrm{gr} \, d_0|_E)=L_1^{d-1} \mathcal{H}^{d-1}\left(\bigcup_{i=1}^N \, \mathrm{gr} \, d_0|_{h_i(A_i)}\right) \\&=L_1^{d-1} 
\sum_{i=1}^N \mathcal{H}^{d-1}\left(\mathrm{gr} \, d_0|_{h_i(A_i)}\right) \le L_1^{d-1} L_2^{d-1} 
\sum_{i=1}^N \mathcal{H}^{d-1}\left(\mathrm{gr} \, \psi_i|_{A_i}\right) 
\\ &\le L_1^{d-1} L_2^{d-1} 
\sum_{i=1}^N \left( \mathrm{Lip}(\psi_i) + 1\right)\mathcal{H}^{d-1}\left(A_i\right) \le L_1^{d-1} L_2^{d} \left( \mathrm{Lip}(d_0|E) + 1\right)  \sum_{i=1}^N \mathcal{H}^{d-1}\left(A_i\right)
\\&\le L_1^{d-1} L_2^{2d-2} \left( \mathrm{Lip}(d_0|_E) + 1\right) \sum_{i=1}^N \mathcal{H}^{d-1}_{\mathrm{eu}}\left(h_i(A_i)\right) \le L_1^{d-1} L_2^{2d-2} \left( \mathrm{Lip}(\psi) + 1\right) \mathcal{H}^{d-1}_{\mathrm{eu}}\left(E\right) \, .
\end{align*}
The statement is then proved by setting $C_2 \coloneq L_1^{d-1} L_2^{2d-1}$.
\end{proof}

\section{Abnormal set and Minkowski content} \label{section_4}

The \emph{abnormal set} $\abn(\G)$ of the Carnot group $\G$ as in \eqref{def_G} is, by definition, the set of critical values of the end-point map $\mathrm{End}$ defined as
\begin{equation} \label{def_endppointmap}
  \mathrm{End} \colon L^1([0,1],\g_1) \to \G, \qquad u \mapsto \gamma_u(1),  
\end{equation}
where $\gamma_u$ is defined as in \eqref{curve}. In this case, the abnormal set can be described using Grassmannians. We recall that the \emph{Grassmannian} $\mathrm{Gr}(n,k)$, with $n,k \in \N$, is the family of all $k$-dimensional vector subspaces of $\R^n$. From \cite[Equation~3.9]{sardproperty}, the abnormal set $\abn(\G)$ can be written as
\begin{equation} \label{abnormalset}
    \abn(\G) = \bigcup \Set{ \mathcal{P} \times \twowedge (\mathcal{P}) \, \colon \mathcal{P} \in \mathrm{Gr}(n,n-2)} .
\end{equation}

In the following result we summarize some properties of the abnormal set \eqref{abnormalset}, referring to \cite{sardproperty} for the details. 
\begin{proposition} \label{codim3} \cite[Theorem 3.14, Theorem 3.15]{sardproperty}
    Let $\G$ be as in \eqref{def_G}, then the abnormal set \eqref{abnormalset} is an algebraic variety of co-dimension $3$ within $\G$ and it is invariant by dilations, i.e., 
    \begin{equation*}
        \delta_t(\abn(\G)) = \abn(\G) \, , \qquad \text{for every $t \in (0,+\infty)$.}
    \end{equation*}
\end{proposition}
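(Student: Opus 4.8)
The plan is to establish the two claims—algebraicity with codimension $3$, and dilation invariance—by working directly from the explicit description \eqref{abnormalset} of $\abn(\G)$ as the union of the fibers $\mathcal{P}\times\twowedge(\mathcal{P})$ as $\mathcal{P}$ ranges over $\mathrm{Gr}(n,n-2)$.

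First I would settle dilation invariance, which is the easy half. Since $\delta_t(x,Y)=(tx,t^2Y)$ and each $\mathcal{P}\in\mathrm{Gr}(n,n-2)$ is a linear subspace, we have $t\mathcal{P}=\mathcal{P}$ and $t^2\twowedge(\mathcal{P})=\twowedge(\mathcal{P})$ for every $t\neq 0$; hence $\delta_t\big(\mathcal{P}\times\twowedge(\mathcal{P})\big)=\mathcal{P}\times\twowedge(\mathcal{P})$, and taking the union over all $\mathcal{P}$ gives $\delta_t(\abn(\G))=\abn(\G)$. (Strictly speaking this is immediate once one grants the description \eqref{abnormalset}, which is cited.)

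Next, for algebraicity and the dimension count, I would exhibit $\abn(\G)$ as the image of a projective morphism, or more concretely describe it by polynomial equations. The natural approach is to consider the incidence variety $\mathcal{I}\subseteq \mathrm{Gr}(n,n-2)\times\G$ consisting of pairs $(\mathcal{P},(x,Y))$ with $x\in\mathcal{P}$ and $Y\in\twowedge(\mathcal{P})$; this is closed, and the fiber over each $\mathcal{P}$ is the linear space $\mathcal{P}\times\twowedge(\mathcal{P})$ of dimension $(n-2)+\binom{n-2}{2}$. Since $\mathrm{Gr}(n,n-2)$ is a smooth projective variety of dimension $2(n-2)$, the incidence variety $\mathcal{I}$ is irreducible of dimension $2(n-2)+(n-2)+\binom{n-2}{2}$. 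Then $\abn(\G)$ is the image of $\mathcal{I}$ under the (proper) projection to $\G$; by Chevalley's theorem this image is constructible, and properness of the projection (using that $\mathrm{Gr}(n,n-2)$ is complete) makes it closed, hence a genuine algebraic variety. For the codimension: the generic fiber of $\mathcal{I}\to\abn(\G)$ is a single point—given a generic $(x,Y)$ in the abnormal set, the subspace $\mathcal{P}$ with $x\in\mathcal{P}$ and $Y\in\twowedge(\mathcal{P})$ is uniquely recovered (for instance $\mathcal{P}$ must contain $x$ and the image of the contraction of $Y$, and generically these already force $\mathcal{P}$)—so $\dim\abn(\G)=\dim\mathcal{I}=2(n-2)+(n-2)+\binom{n-2}{2}$. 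Comparing with $\dim\G=n+\binom{n}{2}$, a short computation gives $\dim\G-\dim\abn(\G)=3$, which is the asserted codimension.

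The step I expect to be the main obstacle is the precise verification that the generic fiber of $\mathcal{I}\to\abn(\G)$ is zero-dimensional, i.e., that for a generic point of the abnormal set the witnessing subspace $\mathcal{P}$ is unique; a clean way to do this is to note that contraction of $Y\in\twowedge(\mathcal{P})$ against covectors produces vectors lying in $\mathcal{P}$, and for $Y$ of maximal rank inside $\twowedge(\mathcal{P})$ together with $x$ these span all of $\mathcal{P}$, pinning it down. Once this genericity statement is in hand, the dimension bookkeeping is routine, and the remaining assertions follow from standard facts about Grassmannians, incidence varieties, and proper pushforward of algebraic sets; alternatively, all of this is exactly the content of \cite[Theorems 3.14 and 3.15]{sardproperty}, to which we may simply defer.
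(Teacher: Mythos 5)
The paper itself does not prove this proposition; it is quoted directly from \cite{sardproperty}, so there is no in-text argument to compare against. Your derivation is sound on the easy points: dilation invariance is immediate from \eqref{abnormalset} because each fiber $\mathcal{P}\times\twowedge(\mathcal{P})$ is a graded linear subspace, and the dimension bookkeeping via the incidence variety $\mathcal{I}$ is correct, including the generic-fiber-is-a-point step (generically the span of $x$ together with the support of $Y$ already has dimension $n-2$ and must equal $\mathcal{P}$), which gives $\dim\mathcal{I}=3(n-2)+\binom{n-2}{2}$ and hence codimension $3$.

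There is, however, a genuine gap in the algebraicity claim. You are working over $\R$, and there Chevalley's theorem together with completeness of the Grassmannian only yields that the image of $\mathcal{I}$ in $\G$ is compact (hence Euclidean-closed) and semi-algebraic; unlike over an algebraically closed field, the set of real points of the image of a projective morphism need not be Zariski-closed. A toy example: projecting the real locus of $\{xs^2+t^2=0\}\subseteq\mathbb{P}^1\times\R$ to $\R$ gives the half-line $\{x\le 0\}$, which is semi-algebraic but not algebraic. Since the paper genuinely uses that $\abn(\G)$ is an algebraic variety (in the proof of \cref{dimabn}, through \cite{Benedetti} and \cite[Theorem~3.2.39]{Federer}), this step cannot be waved away. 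The fix is to exhibit explicit polynomial equations: identify $Y\in\twowedge(\R^n)$ with a skew-symmetric $n\times n$ matrix and observe that $(x,Y)\in\abn(\G)$ if and only if the $(n+1)\times n$ matrix obtained by stacking $Y$ on top of the row $x^T$ has rank at most $n-2$. Indeed, a $2$-dimensional $W$ in the kernel of this stacked matrix yields $\mathcal{P}:=W^\perp$ with $x\in\mathcal{P}$, and by skew-symmetry $W\subseteq\ker Y$ is equivalent to $Y\in\twowedge(W^\perp)$; conversely any admissible $\mathcal{P}$ gives such a $W:=\mathcal{P}^\perp$. Thus $\abn(\G)$ is the determinantal variety cut out by the vanishing of all $(n-1)\times(n-1)$ minors, in particular a bona fide algebraic variety, and your dimension count then applies. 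This determinantal presentation is essentially what the cited theorems of \cite{sardproperty} establish.
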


We recall from \cite[Section~3.4]{Benedetti} that a bounded algebraic variety $\mathcal{A}\subseteq \R^d$ of dimension $m$ is the finite union of smooth compact sub-varieties of $\R^d$ of dimension not greater than $m$. In particular $\mathcal{A}$ is closed and $m$-rectifiable in the stronger sense of \cite[Definition~3.2.14]{Federer}. Therefore, by \cite[Theorem~3.2.39]{Federer}, its $m$-dimensional Minkowski content equals its $m$-dimensional Hausdorff measure, which is finite. The following proposition states that an $m$-rectifiable set has finite $m$-dimensional upper Minkowski content and it is a simple consequence of \cite[Theorem~3.2.39]{Federer}.

\begin{proposition} \label{Minkowski_content}
    Let $0 \le k \le d$ be integer numbers and $\mathcal{S} \subseteq \R^d$ be a $(d-k)$-rectifiable set in the stronger sense of \cite[Definition~3.2.14]{Federer}, then, 
    for every $\varepsilon_0>0$, there exists $c > 0$ such that 
    \begin{equation*}
        \mathcal{H}^{d}\left(\set{x \in \R^d \, | \, \deu(x,\mathcal{S})\le\varepsilon}\right) \le c \cdot \varepsilon^k \, , \qquad \text{for every $\varepsilon\in (0,\varepsilon_0)$.}
    \end{equation*}
\end{proposition}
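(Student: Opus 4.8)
The plan is to obtain the estimate from Federer's identification of the Minkowski content with the Hausdorff measure for rectifiable sets, \cite[Theorem~3.2.39]{Federer}, and then to remove the asymptotic nature of that statement using that $\mathcal{S}$ is bounded (which is the case in all our applications, e.g.\ $\mathcal{S}=\abn(\G)\cap\sfeu$). So I would work with $\mathcal{S}$ closed and bounded, set $m\coloneq d-k$, so that $\mathcal{S}$ is a closed $m$-rectifiable set in the stronger sense of \cite[Definition~3.2.14]{Federer} with $\mathcal{H}^{m}(\mathcal{S})<\infty$ and $d-m=k$, and recall that on $\R^{d}$ the measure $\mathcal{H}^{d}$ coincides with the Lebesgue measure.

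First I would apply \cite[Theorem~3.2.39]{Federer}: since $\mathcal{S}$ is closed and $m$-rectifiable, its $k$-codimensional Minkowski content exists and equals $\mathcal{H}^{m}(\mathcal{S})$, that is,
\begin{equation*}
    \lim_{\varepsilon\to 0^{+}}\frac{\mathcal{H}^{d}\left(\set{x\in\R^{d}\,:\,\deu(x,\mathcal{S})<\varepsilon}\right)}{\varepsilon^{k}}=\omega_{k}\,\mathcal{H}^{d-k}(\mathcal{S})<+\infty\,,
\end{equation*}
where $\omega_{k}$ denotes the Lebesgue measure of the unit ball of $\R^{k}$. Since $\set{x:\deu(x,\mathcal{S})\le\varepsilon}\subseteq\set{x:\deu(x,\mathcal{S})<2\varepsilon}$, this limit controls also the closed tubular neighbourhoods (up to the factor $2^{k}$), so there exist $\varepsilon_{1}\in(0,\varepsilon_{0})$ and $c_{1}>0$ with
\begin{equation*}
    \mathcal{H}^{d}\left(\set{x\in\R^{d}\,:\,\deu(x,\mathcal{S})\le\varepsilon}\right)\le c_{1}\,\varepsilon^{k}\,,\qquad\text{for every }\varepsilon\in(0,\varepsilon_{1})\,.
\end{equation*}

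It then remains to cover the range $\varepsilon\in[\varepsilon_{1},\varepsilon_{0})$, where boundedness of $\mathcal{S}$ suffices: the set $U\coloneq\set{x\in\R^{d}:\deu(x,\mathcal{S})\le\varepsilon_{0}}$ is bounded and contains every closed tube with $\varepsilon<\varepsilon_{0}$, so $\mathcal{H}^{d}(\set{x:\deu(x,\mathcal{S})\le\varepsilon})\le\mathcal{H}^{d}(U)\le\mathcal{H}^{d}(U)\,\varepsilon_{1}^{-k}\,\varepsilon^{k}$ whenever $\varepsilon\ge\varepsilon_{1}$ (using $k\ge 0$). Taking $c\coloneq\max\set{c_{1},\,\mathcal{H}^{d}(U)\,\varepsilon_{1}^{-k}}$ then yields the claimed bound for all $\varepsilon\in(0,\varepsilon_{0})$.

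The argument is essentially routine and I do not expect a real obstacle. The one point deserving care is that \cite[Theorem~3.2.39]{Federer} describes the tubular neighbourhoods only asymptotically as $\varepsilon\to 0^{+}$, so one genuinely needs the boundedness of $\mathcal{S}$ to promote this to a uniform estimate over $(0,\varepsilon_{0})$; the passage from open to closed tubes and the degenerate case $k=0$ (where $\varepsilon^{k}\equiv 1$) are immediate.
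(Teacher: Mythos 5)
Your proof is correct and fills in precisely what the paper has in mind: the paper states without proof that the proposition is a simple consequence of Federer's Theorem 3.2.39, and your argument (apply the Minkowski-content limit for small $\varepsilon$, pass from closed to open tubes via the $2\varepsilon$ trick, and use boundedness on the remaining compact range $[\varepsilon_1,\varepsilon_0)$) is the intended one. The one small cleanup: rather than assuming $\mathcal{S}$ is closed because that holds in the applications, note that the $\varepsilon$-tubes of $\mathcal{S}$ and $\overline{\mathcal{S}}$ coincide and that $\overline{\mathcal{S}}$ is again $(d-k)$-rectifiable in the stronger sense of \cite[Definition~3.2.14]{Federer} (a Lipschitz parametrization on a bounded set extends to a Lipschitz parametrization of the same image-closure on the closure of the domain), so the reduction to the closed case, which Federer's theorem genuinely requires, is automatic and the proposition holds exactly as stated.
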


A similar result allows us to estimate the $(d-1)$-dimensional Hausdorff measure of the following set
\begin{equation} \label{def_A_delta}
A_\delta \coloneq \set{p \in \sfeu \, | \, \deu(p,\abn(\G))\le \delta}.\end{equation}

\begin{proposition} \label{dimabn}
    Let $\G$ be as in \eqref{def_G} and $d$ be its topological dimension. Fix on $\G$ an adequate scalar product and denote by $\deu$ the induced distance. With respect to $\deu$, consider the sphere \eqref{def_seu} and its subsets \eqref{def_A_delta}.
    Then there exists a constant $C_3$ such that 
    \begin{equation*}
        \mathcal{H}^{d-1}(A_\delta) \le C_3\delta^3 \,, \qquad \text{for every $\delta\in (0,1)$.}
    \end{equation*}
\end{proposition}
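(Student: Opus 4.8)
The plan is to deduce \cref{dimabn} from \cref{Minkowski_content} by relating the $(d-1)$-dimensional measure of the "slice" $A_\delta$ of the sphere to the $d$-dimensional measure of a tubular neighborhood of $\abn(\G)$ in the ambient space $\G \cong \R^d$. The key geometric fact that makes this work is the dilation-invariance of $\abn(\G)$ from \cref{codim3}, combined with the transversality of the dilation vector field $\widehat\delta$ to the foliation $\{\sfeu(r)\}$ established in \cref{dilation_transverse} and the diffeomorphism $\phi$ of \cref{diffeophi}. Since $\abn(\G)$ is a bounded (after intersecting with a fixed compact neighborhood of $\sfeu$) algebraic variety of codimension $3$, \cref{Minkowski_content} with $k=3$ applies to it.

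First I would work inside a fixed compact region, say the closed annulus $\Omega \coloneq \phi(\sfeu \times [1/2, 2])$, which is a compact neighborhood of $\sfeu$ inside $\G \setminus \{0\}$. Set $\mathcal S \coloneq \abn(\G) \cap \Omega$, which is a bounded $(d-3)$-rectifiable subset of $\R^d$, so \cref{Minkowski_content} gives a constant $c>0$ and $\varepsilon_0>0$ with $\mathcal H^d(\{x \in \R^d : \deu(x,\mathcal S) \le \varepsilon\}) \le c\,\varepsilon^3$ for $\varepsilon \in (0,\varepsilon_0)$. The main step is then to show that the full tube $T_\delta \coloneq \phi(A_\delta \times [1-\delta', 1+\delta'])$, for a suitably small fixed $\delta' > 0$, is comparable in $\mathcal H^d$-measure to $\delta \cdot \mathcal H^{d-1}(A_\delta)$ from above and below — the lower bound coming from a co-area / Fubini-type argument using that $\phi$ restricted to $\Omega$ is bi-Lipschitz, so that $\mathcal H^d$ of the graph-cylinder over $A_\delta$ is at least a constant times $\delta' \cdot \mathcal H^{d-1}(A_\delta)$. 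Simultaneously, one checks that every point of this cylinder lies within $\deu$-distance $C\delta$ of $\mathcal S$: if $p \in A_\delta$ then there is $q \in \abn(\G)$ with $\deu(p,q) \le \delta$, and by dilation-invariance of $\abn$ the whole dilation curve through $q$ stays in $\abn$, so a nearby point of $\abn \cap \Omega$ is within $C\delta$ of any point on the dilation curve through $p$ restricted to $[1-\delta',1+\delta']$; here the Lipschitz bounds on $\phi$ and $\phi^{-1}$ on $\Omega$ convert the estimate on $\sfeu$ into an estimate in $\G$. Chaining these inequalities for $\delta < \varepsilon_0/C$ yields
\begin{equation*}
    \delta'\, C^{-1}\, \mathcal H^{d-1}(A_\delta) \le \mathcal H^d(T_\delta) \le \mathcal H^d\big(\{x : \deu(x,\mathcal S) \le C\delta\}\big) \le c\,(C\delta)^3,
\end{equation*}
and absorbing constants gives $\mathcal H^{d-1}(A_\delta) \le C_3 \delta^3$ for $\delta \in (0, \varepsilon_0/C)$; for $\delta$ in the remaining bounded range $[\varepsilon_0/C, 1)$ the bound is trivial since $\mathcal H^{d-1}(A_\delta) \le \mathcal H^{d-1}(\sfeu) < \infty$ and one can enlarge $C_3$.

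The main obstacle I anticipate is the lower bound $\mathcal H^d(T_\delta) \gtrsim \delta \,\mathcal H^{d-1}(A_\delta)$: one needs to see the cylinder $\phi(A_\delta \times [1-\delta',1+\delta'])$ as genuinely $d$-dimensional with the right scaling, which requires the transversality of $\widehat\delta$ (so the product structure $\sfeu \times (0,\infty)$ is not degenerating) together with uniform bi-Lipschitz control of $\phi$ on the compact annulus — both available from \cref{dilation_transverse} and \cref{diffeophi} — and then a Fubini argument on $\sfeu \times [1-\delta',1+\delta']$ for the product measure, transferred through $\phi$. Making the constant in "$\deu$-distance $\le C\delta$" uniform over the annulus is routine once $\phi|_\Omega$ and $(\phi|_\Omega)^{-1}$ are known Lipschitz, but care is needed that the dilation factor $\delta'$ is fixed first, independently of $\delta$, so that all constants are genuinely uniform.
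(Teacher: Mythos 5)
Your proof is correct, but it takes a genuinely different route from the paper's, in a pleasingly dual way. The paper first projects radially: it introduces the Lipschitz retraction $\sigma$ onto $\sfeu$ and uses the dilation-invariance of $\abn(\G)$ to show that $A_\delta$ is contained in a $(L_1\delta)$-neighbourhood \emph{inside $\sfeu$} of the set $\abn(\G)\cap\sfeu$; it then establishes via transversality (\cref{dilation_transverse}) and the implicit function theorem that $\abn(\G)\cap\sfeu$ is a bounded algebraic variety of dimension $d-4$, i.e.\ codimension $3$ within the $(d-1)$-dimensional sphere, and applies \cref{Minkowski_content} with $k=3$ inside the sphere (after passing to bi-Lipschitz charts into $\R^{d-1}$). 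You instead go \emph{up} one dimension: you thicken $A_\delta$ radially into a cylinder $T_\delta=\phi(A_\delta\times[1-\delta',1+\delta'])\subseteq\R^d$, observe via dilation-invariance of $\abn(\G)$ and uniform Lipschitz bounds that $T_\delta$ lies inside a $(C\delta)$-neighbourhood of the compact piece $\mathcal S=\abn(\G)\cap\Omega$, compare $\mathcal H^d(T_\delta)$ to $\delta'\cdot\mathcal H^{d-1}(A_\delta)$ via Fubini for the product metric and bi-Lipschitz control of $\phi$, and then apply \cref{Minkowski_content} with $k=3$ directly in the ambient $\R^d$. Your version avoids the transversality/implicit-function-theorem step of showing $\abn\cap\sfeu$ has dimension $d-4$, at the price of a Fubini argument for Hausdorff measure on a Riemannian product; both steps are routine.

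Two small points worth making explicit if you were to write this out fully. First, to invoke \cref{Minkowski_content} you need $\mathcal S$ to be $(d-3)$-rectifiable in the strong sense of Federer; the paper gets this for $\abn(\G)\cap\sfeu$ by citing that a \emph{bounded algebraic variety} is strongly rectifiable, but your $\mathcal S=\abn(\G)\cap\Omega$ is a compact slice of a variety and not itself a variety, so you should either intersect with a large closed ball and argue via stratification/semi-algebraicity, or observe that it suffices that $\mathcal S$ be contained in such a strongly rectifiable set. Second, as you already flag, the auxiliary width $\delta'$ must be fixed once and for all (say $\delta'=1/4$), and then you only get the inequality for $\delta$ below a threshold depending on $\delta'$ and the annulus; the extension to all $\delta\in(0,1)$ by enlarging $C_3$ is as you describe.
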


\begin{proof}
We firstly remark that is not restrictive to prove the statement for $\delta$
smaller than a given $\delta_0>0$. We recall from \cref{diffeophi} that for every $g \in \G \setminus \set{0}$ the set $r_g \cap \sfeu$, where $r_g$ is defined as in \eqref{dilation_radius}, is a singleton. Denote by $\hat{g}$ the element of that singleton. We then define the map
\begin{equation} \label{map_pi}
     \sigma \colon \Set{g \in \G \, | \, \deu(g,\sfeu) \le 1/2} \to \sfeu \, , \qquad
     g \mapsto \hat{g} \, .
\end{equation}

Consider the diffeomorphism $\phi$ as in \eqref{def_phi}. We observe that the map $\sigma$ is conjugated, thought $\phi$, to the map
\begin{equation*}
    f \colon \sfeu \times (0,+\infty) \to \sfeu \times (0,+\infty) \, , \qquad (x,t) \mapsto (x,1) \, ,
\end{equation*}
which is $1$-Lipschitz if we consider on $\sfeu \times (0,+\infty)$ the product metric $\sqrt{\deu^2 + \lvert \cdot \rvert^2}$. Since $\phi$ is bi-Lipschitz when restricted to compact sets, we conclude that $\sigma$ is Lipschitz as well. We denote the Lipschitz constant of $\sigma$ by $L_1$. 

From \cref{codim3}, we get that $\abn(\G)$ is closed and invariant by dilation. Then, for every $\delta \le 1/2$ and $p \in A_\delta$, we have $\delta \ge \deu(p,\abn(\G))=\deu(p,q)$ for some $q \in \abn(\G)$. The estimate $\deu(p,\sigma(q))=\deu(\sigma(p),\sigma(q))\le L_1\deu(p,q)\le L_1\delta$ implies the following inclusion:
\begin{equation} \label{a_delta_inclusion}
    A_\delta \subseteq \set{p \in \sfeu \, | \, \deu(p,\abn(\G)\cap\sfeu)\le L_1\delta}, \qquad \text{for every $\delta \le 1/2$.}
\end{equation}

Since $\sfeu$ is compact and locally bi-Lipschitz equivalent to $\R^{d-1}$, we can consider a finite collection of open subsets $B_1,\dots,B_N \subseteq \R^{d-1}$ and bi-Lipschitz maps $h_i \colon B_i \to h_i(B_i) \subseteq \sfeu$ such that $\Set{h_i(B_i) \, | \, 1 \le i \le N}$ is a covering of $\sfeu$. Let $\delta_0>0$ be such that, for every $p,q \in \sfeu$, 
\begin{equation} \label{delta_0}
\deu(p,q)\le L_1\delta_0 \quad \Longrightarrow \quad \text{$p,q \in h_i(B_i)$ for some $1 \le i \le N$.}
\end{equation}
Let $L_2 \ge 1$ be a constant greater than $\mathrm{Lip}(h_i)$ and $\mathrm{Lip}(h_i^{-1})$ for every $1 \le i \le N$. We define the set
\begin{equation*} 
E_{i,\delta}\coloneq \Set{\Bar{p} \in B_i \, | \, d(\Bar{p},h_i^{-1}(\abn(\G) \cap h_i(B_i)) \le L_1 L_2 \delta},
\end{equation*}
and we claim that, for every $\delta \le \delta_0$,
\begin{equation} \label{hi_inclusion}
\set{p \in \sfeu \, | \, \deu(p,\abn(\G)\cap\sfeu)\le L_1\delta} \subseteq \bigcup_{i=1}^N h_i\left(E_{i,\delta}\right) \, .    
\end{equation}
Indeed, fix $\delta \le \delta_0$ and $p \in \sfeu$ such that $\deu(p,\abn(\G)\cap\sfeu)\le L_1\delta$. Then $\deu(p,q)\le L_1\delta_0$ for some $q \in \abn(\G)\cap\sfeu$. By \eqref{delta_0}, there exists $1 \le i \le N$ such that $p,q \in h_i(B_i)$. Define $\Bar{p}\coloneq h_i^{-1}(p)$ and $\Bar{q}\coloneq h_i^{-1}(q)$. Then $\Bar{q} \in h_i^{-1}(\abn(\G) \cap \sfeu)$ and
\begin{equation*}
    d(\Bar{p},\Bar{q}) \le L_2 \deu(h_i(\Bar{p}),h_i(\Bar{q})) \le L_2\deu(p,q) \le L_1L_2\delta \, .
\end{equation*}
Therefore $p \in h_i\left(E_{i,\delta}\right)$ and the inclusion \eqref{hi_inclusion} is proved.

From \cref{codim3}, the set $\abn(\G)$ is an algebraic variety of dimension $d-3$ and invariant by dilations. Moreover, from \cref{dilation_transverse}, the manifold $\sfeu$ is transverse to the dilation flow \eqref{dilation_flow}. As a consequence of the implicit function theorem, we infer that $\abn(\G) \cap \sfeu$ is a bounded algebraic variety of dimension $d-4$, hence it is $(d-4)$-rectifiable. We observe that each $h_i^{-1}(\abn(\G) \cap h_i(B_i))$ is the image by a Lipschitz map of the $(d-4)$-rectifiable set $\abn(\G) \cap h_i(B_i)$. By \cref{Minkowski_content} with $\varepsilon_0 \coloneq L_1L_2\delta_0$, there exists $c>0$ such that, for every $1 \le i \le N$,
\begin{equation} \label{estimate_eidelta}
    \mathcal{H}^{d-1}(E_{i,\delta}) \le c L_1^{3}L_2^{3} \delta^3 \, , \qquad \text{for every $\delta \le \delta_0$.}
\end{equation}
Let $\delta\le\delta_0$, we can then compute
\begin{align*}
   \mathcal{H}^{d-1}\left(A_{\delta}\right) &\stackrel{\eqref{a_delta_inclusion}}{\le} \mathcal{H}^{d-1}\left(\set{x \in \sfeu \, | \, \deu(x,\abn(\G)\cap\sfeu)\le L_1\delta}\right) \stackrel{\eqref{hi_inclusion}}{\le}\sum_{i=1}^N \mathcal{H}^{d-1}\left(h_i(E_{i,\delta})\right) \\
   &\le \sum_{i=1}^N L_2^{d-1}\mathcal{H}^{d-1}(E_{i,\delta}) \stackrel{\eqref{estimate_eidelta}}{\le} \sum_{i=1}^N cL_2^{d-1} L_1^{3}L_2^{3} \delta^3 \le cNL_1^3L_2^{d+2}\delta^3 \, .
\end{align*}
The statement is proved by setting $C_3 \coloneq cNL_1^3L_2^{d+2}$.
\end{proof}

As another consequence of \eqref{abnormalset}, we have that
\begin{equation} \label{distinf}
    \deu\left(g, \abn(\G)\right)= \inf \Set{ \deu\left(g, \mathcal{P} \times \twowedge (\mathcal{P})\right) \, \colon \mathcal{P} \in \mathrm{Gr}(n,n-2)} .
\end{equation}
In the latter part of this section we investigate some properties for a geodesic $\gamma$ starting from the origin assuming that the final point $\gamma(1)$ is sufficiently far away from $\abn(\G)$.

\begin{remark} \label{distancebound}
Let $\gamma \colon [0,1] \to \G$ be a horizontal curve with control $u \in L^1\left([0,1],\g_1\right)$. Given $\mathcal{P} \in \gr(n,k)$ we can decompose $u(t)=u_1(t)+u_2(t)$ with $u_1(t) \in \mathcal{P}$ and $u_2(t) \in \mathcal{P}^{\perp_1}$. We claim that for every $t \in [0,1]$ we have
\begin{equation}\label{distancegammap}
    \deu\left(\pi(\gamma(t)),\p\right) = \bigg\lVert \int^t_0 u_2(s) \, \de s \bigg\rVert_\mathrm{eu} \, , \qquad \text{for every $t \in [0,1]$.}
\end{equation}
Indeed, 
\begin{align*}
    \deu\left(\pi(\gamma(t)),\p\right) &\stackrel{\eqref{curve}}{=} \deu \left( \int^t_0 u(s) \, \de s,\p \right) = \deu \left( \int^t_0 u_1(s) \, \de s +\int^t_0 u_2(s) \, \de s,\p \right) \\ &= \deu \left( \int^t_0 u_2(s) \, \de s, 0_\G \right) = \bigg\lVert \int^t_0 u_2(s) \, \de s \bigg\rVert_\mathrm{eu} \, .
\end{align*}
\end{remark}

For the rest of this section, we fix a Carnot sub-Finsler structure $\lVert \cdot \rVert_\mathrm{sf}$ on the group \eqref{def_G}.

\begin{lemma} \label{distancewedge}
There exists a constant $K$ such that for every $\varepsilon >0$, every sub-Finsler geodesic $\gamma \colon [0,1] \to \G$, with $\gamma(0)=0_\G$ and $\gamma(1) \in \sfeu$, and every $\p \le \R^n$, if $\deu(\pi(\gamma(t)),\p) < \varepsilon$ for every $t \in [0,1]$, then $\deu\left(\gamma(1), \p \times \textstyle{\twowedge} (\p)\right) < K\varepsilon$.
\end{lemma}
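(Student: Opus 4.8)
The plan is to make $\gamma(1)$ explicit via \eqref{curve} and to split the control along $\p$ and its Euclidean orthogonal complement. Write $\gamma(1)=(x,Y)$ with $x=\int_0^1 u(s)\,\de s$ and $Y=\tfrac12\int_0^1\bigl(\int_0^s u(\tau)\,\de\tau\bigr)\wedge u(s)\,\de s$, decompose $u(s)=u_1(s)+u_2(s)$ with $u_1(s)\in\p$ and $u_2(s)\in\p^{\perp_1}$, and set $U(s)=\int_0^s u=U_1(s)+U_2(s)$ where $U_i(s)=\int_0^s u_i$. I will use two a priori bounds. First, since $\gamma$ is a geodesic with $\gamma(1)\in\sfeu$, we have $\int_0^1\lVert u(s)\rVert_\mathrm{sf}\,\de s=\ell(\gamma)=\dcc(0_\G,\gamma(1))\le M$ by \eqref{costantM}; together with \eqref{constantc1} and $\lVert u_i(s)\rVert_\mathrm{eu}\le\lVert u(s)\rVert_\mathrm{eu}$ this gives $\int_0^1\lVert u_i(s)\rVert_\mathrm{eu}\,\de s\le C_1 M$ for $i=1,2$. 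Second, by \cref{distancebound} the hypothesis $\deu(\pi(\gamma(t)),\p)<\varepsilon$ is precisely $\lVert U_2(t)\rVert_\mathrm{eu}<\varepsilon$ for every $t\in[0,1]$, in particular for $t=1$.

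Since the scalar product is adequate, $\R^n$ and $\twowedge(\R^n)$ are orthogonal, so $\p\times\twowedge(\p)$ is the orthogonal sum of $\p\le\R^n$ and $\twowedge(\p)\le\twowedge(\R^n)$; denoting by $x_\p$ and $Y_\p$ the orthogonal projections of $x$ onto $\p$ and of $Y$ onto $\twowedge(\p)$, this yields
\[
\deu\bigl(\gamma(1),\p\times\twowedge(\p)\bigr)^2=\lVert x-x_\p\rVert_\mathrm{eu}^2+\lVert Y-Y_\p\rVert_\mathrm{eu}^2 .
\]
The first summand equals $\lVert U_2(1)\rVert_\mathrm{eu}^2<\varepsilon^2$, since $x=U_1(1)+U_2(1)$ with $U_1(1)\in\p$ and $U_2(1)\in\p^{\perp_1}$. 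For the second, expand $U(s)\wedge u(s)=\sum_{i,j\in\{1,2\}}U_i(s)\wedge u_j(s)$: by \cref{wedgeorthogonality} only the $i=j=1$ term lies in $\twowedge(\p)$, while the other three lie in $(\twowedge(\p))^{\perp_2}$, so $Y_\p=\tfrac12\int_0^1 U_1\wedge u_1\,\de s$ and
\[
Y-Y_\p=\tfrac12\int_0^1\bigl(U_1\wedge u_2+U_2\wedge u_1+U_2\wedge u_2\bigr)\,\de s .
\]

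It remains to bound $\lVert Y-Y_\p\rVert_\mathrm{eu}$ by $O(\varepsilon)$. The terms with $U_2\wedge u_1$ and $U_2\wedge u_2$ are handled directly: by \eqref{wedge_norm_ineq}, $\lVert U_2(s)\rVert_\mathrm{eu}<\varepsilon$ and the a priori $L^1$-bounds, $\int_0^1\lVert U_2(s)\wedge u_i(s)\rVert_\mathrm{eu}\,\de s\le\varepsilon\int_0^1\lVert u_i(s)\rVert_\mathrm{eu}\,\de s\le C_1 M\varepsilon$ for $i=1,2$. The term $\int_0^1 U_1\wedge u_2\,\de s$ is the one real obstacle, because $\lVert u_2(s)\rVert_\mathrm{eu}$ is not pointwise small, only integrable, so the naive bound by $\lVert U_1(s)\rVert_\mathrm{eu}\lVert u_2(s)\rVert_\mathrm{eu}$ does not carry a factor $\varepsilon$. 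I would resolve this by integrating by parts (legitimate since $U_1,U_2$ are absolutely continuous with $L^1$ derivatives $u_1,u_2$, hence $U_1\wedge U_2$ is absolutely continuous), combining it with the $U_2\wedge u_1$ term:
\[
\int_0^1\bigl(U_1\wedge u_2+U_2\wedge u_1\bigr)\,\de s=\int_0^1\frac{\de}{\de s}\bigl(U_1\wedge U_2\bigr)\,\de s-2\int_0^1 u_1\wedge U_2\,\de s=U_1(1)\wedge U_2(1)+2\int_0^1 U_2\wedge u_1\,\de s,
\]
and $\lVert U_1(1)\wedge U_2(1)\rVert_\mathrm{eu}\le\lVert U_1(1)\rVert_\mathrm{eu}\lVert U_2(1)\rVert_\mathrm{eu}\le C_1 M\varepsilon$ again by \eqref{wedge_norm_ineq} and the a priori bounds. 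Collecting the estimates gives $\lVert Y-Y_\p\rVert_\mathrm{eu}\le 2C_1 M\varepsilon$, hence $\deu(\gamma(1),\p\times\twowedge(\p))<\sqrt{1+4C_1^2M^2}\,\varepsilon\le(1+2C_1 M)\varepsilon$, and the lemma holds with $K\coloneq 1+2C_1 M$. Apart from the integration by parts, the argument is a routine chain of triangle and Cauchy--Schwarz--type inequalities.
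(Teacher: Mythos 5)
Your proof is correct and follows essentially the same route as the paper's: decompose $u=u_1+u_2$ along $\p\oplus\p^{\perp_1}$, use \cref{distancebound} to see that $U_2(t)=\int_0^t u_2$ stays $\varepsilon$-small, identify via \cref{wedgeorthogonality} which pieces of $\gamma_u(1)$ lie in $(\p\times\twowedge(\p))^\perp$, and bound them. Your integration by parts on $U_1\wedge u_2$ is identical in content to the paper's Fubini rearrangement (both yield $\int_0^1 U_1\wedge u_2=U_1(1)\wedge U_2(1)+\int_0^1 U_2\wedge u_1$), and the same constant $K=1+2C_1M$ comes out; the only cosmetic differences are that you use the $L^1$ bound on $u$ rather than reparametrizing to constant speed, and the Pythagorean decomposition rather than the triangle inequality at the end.
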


\begin{proof}
Let $C_1$ and $M$ be the constants given in \eqref{constantc1} and \eqref{costantM}, respectively. We prove the result for $K \coloneq 1 + 2C_1M$. Let $\varepsilon$, $\gamma$, and  $\p$  be as in the assumptions of the lemma. We can parameterize $\gamma$ with constant speed, with control $u \colon [0,1] \to \R^n$. Since $\gamma$ is a geodesic and $\gamma(1) \in \sfeu$, we have that 
\begin{equation*}
    \lVert u(t) \rVert_\mathrm{sf} \equiv \ell(\gamma) = \dcc(0_\G,\gamma(1)) \stackrel{\eqref{costantM}}{\le} M \, , \qquad \text{for every $t \in [0,1]$.}
\end{equation*}
The latter implies that 
\begin{equation} \label{c1M}
   \lVert u(t) \rVert_\mathrm{eu} \le C_1M \, , \qquad \text{for every $t \in [0,1]$.}
\end{equation} 

Let us now write $u(t) = u_1(t) + u_2(t)$, where $u_1(t) \in \p$ and $u_2(t) \in \p^{\perp_1}$ for every $t \in [0,1]$. By \cref{distancebound}, the condition on $\gamma$ implies that, for every $0 \le a \le b \le 1$,
\begin{equation} \label{integral_norm_estimate}
    \bigg\lVert \int^b_a u_2(s) \, \de s \bigg\rVert_\mathrm{eu} \le \bigg\lVert \int^b_0 u_2(s) \, \de s \bigg\rVert_\mathrm{eu} + \bigg\lVert \int^a_0 u_2(s) \, \de s \bigg\rVert_\mathrm{eu}
    \stackrel{\eqref{distancegammap}}{\le} 2 \cdot \sup_{t \in [0,1]} \deu(\pi(\gamma(t)),\p) \le 2\varepsilon \, .
\end{equation}

We use the decomposition $\g_1 = \mathcal{P} \oplus \mathcal{P}^{\perp_1}$ to write  
\begin{align*}
    \gamma_u(1) &\stackrel{\eqref{curve}}{=} \int^1_0 u_1(t) \de t + \frac{1}{2}\int^1_0  \int^t_0 u_1(s) \,  \de s \wedge u_1 (t) \, \de t + \\ &+\int^1_0 u_2(t) \de t + \frac{1}{2} \int^1_0  \int^t_0 u_1(s) \,  \de s  \wedge u_2(t) \, \de t + \frac{1}{2}\int^1_0  \int^t_0 u_2(s) \,  \de s \wedge u(t) \, \de t \, .
\end{align*}
We observe that the first line belongs to $\p \times \twowedge(\p)$ and, by \cref{wedgeorthogonality}, the second line belongs to $\left(\p \times \twowedge(\p) \right)^\perp$.
We can then write
\begin{align*}
    \deu&\left(\gamma(1), \p \times \textstyle{\twowedge} (\p) \right)= \\
    &= \left\lVert \int^1_0 u_2(t) \de t + \frac{1}{2} \int^1_0  \int^t_0 u_1(s) \,  \de s  \wedge u_2(t) \, \de t + \frac{1}{2}\int^1_0  \int^t_0 u_2(s) \,  \de s \wedge u(t) \, \de t  \right\rVert_\mathrm{eu} \\
    &  \le \left\lVert \int^1_0 u_2(t) \de t \right\rVert_\mathrm{eu} + \frac{1}{2} \left\lVert \int^1_0  \int^t_0 u_1(s) \,  \de s \wedge u_2(t) \, \de t \right\rVert_\mathrm{eu} + \frac{1}{2} \left\lVert \int^1_0  \int^t_0 u_2(s) \,  \de s \wedge u(t) \, \de t \right\rVert_\mathrm{eu} .
\end{align*}
The first term is strictly less than $\varepsilon$ thanks to \eqref{integral_norm_estimate}. We can apply Fubini to the second term and get
\begin{align*}
    \frac{1}{2} \left\lVert \int^1_0  \int^t_0 u_1(s) \,  \de s  \wedge u_2(t) \, \de t \right\rVert_\mathrm{eu} &= \frac{1}{2} \left\lVert \int^1_0  u_1(s)  \wedge  \int^1_s u_2(t) \, \de t \, \de s \right\rVert_\mathrm{eu} \\ &\stackrel{\eqref{wedge_norm_ineq}}{\le} \frac{1}{2} \int^1_0 \left\lVert u_1(s) \right\rVert_\mathrm{eu} \cdot \left\lVert \int^1_s u_2(t) \, \de t \right\rVert_\mathrm{eu} \de s \\ 
    &\stackrel{\eqref{integral_norm_estimate}}\le \frac{1}{2} \int^1_0 \left\lVert u_1(s) \right\rVert_\mathrm{eu} \cdot 2\varepsilon \, \de s\,  \stackrel{\eqref{c1M}}{\le} C_1M\varepsilon \, .
\end{align*}
A similar estimates allows us to bound the third term, namely,
\begin{equation*}
\frac{1}{2} \left\lVert \int^1_0  \int^t_0 u_2(s) \,  \de s  \wedge u(t) \, \de t \right\rVert_\mathrm{eu} \stackrel{\eqref{wedge_norm_ineq}}{\le} \frac{1}{2} \int^1_0 \left\lVert \int^t_0 u_2(s) \, \de s \right\rVert_\mathrm{eu} \cdot \left\lVert u_1(t) \right\rVert_\mathrm{eu} \de t \stackrel{\eqref{c1M},\eqref{integral_norm_estimate}}{\le} C_1M\varepsilon \, .
\end{equation*}
By combining everything together, we finally get
\begin{equation*}
    \deu\left(\gamma(1), \p \times \textstyle{\twowedge} (\p)\right) < 
    \varepsilon + C_1M \varepsilon + C_1M \varepsilon \le \varepsilon \left( 1 + 2C_1M \right) \, .
\end{equation*}
The proposition is then proved by setting $K \coloneq 1 + 2C_1M$.
\end{proof}

\begin{corollary} \label{abndist}
For every sub-Finsler free-Carnot group of step $2$, as in \eqref{def_G}, there exists a constant $C_4>0$ with the following property:

For every $\delta>0$ and every geodesic $\gamma \colon [0,1] \to \G$, with $\gamma(0)=0_\G$ and $\gamma(1) \in \sfeu$, such that $\deu(\gamma(1),\abn(\G))\ge\delta$, then $\Set{\pi(\gamma(t)) \, | \, t \in [0,1]}$ is not contained in the $C_4\delta$-neighbourhood of any $\p \in \gr(n,n-2)$, i.e., for every $\p \in \gr(n,n-2)$ there exists $t \in [0,1]$ such that $\deu(\pi(\gamma(t)),\p) \ge C_4\delta$.
\end{corollary}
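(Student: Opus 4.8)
The plan is to obtain the statement as (essentially) the contrapositive of \cref{distancewedge}, combined with the description \eqref{abnormalset} of the abnormal set. First I would let $K$ be the constant provided by \cref{distancewedge} and set $C_4 \coloneq 1/K$ (taking $C_4 \coloneq 1/(2K)$ would also work and gives a little slack with strict versus non-strict inequalities). Then I would argue by contradiction: suppose there exist $\delta>0$, a sub-Finsler geodesic $\gamma\colon[0,1]\to\G$ with $\gamma(0)=0_\G$, $\gamma(1)\in\sfeu$ and $\deu(\gamma(1),\abn(\G))\ge\delta$, and a subspace $\p\in\gr(n,n-2)$ such that $\Set{\pi(\gamma(t))\mid t\in[0,1]}$ is contained in the $C_4\delta$-neighbourhood of $\p$, i.e.\ $\deu(\pi(\gamma(t)),\p)<C_4\delta$ for every $t\in[0,1]$.

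Applying \cref{distancewedge} with $\varepsilon\coloneq C_4\delta$ to the geodesic $\gamma$ and the subspace $\p$ then yields $\deu\!\left(\gamma(1),\p\times\twowedge(\p)\right)<K\varepsilon=KC_4\delta=\delta$. On the other hand, since $\p$ has dimension $n-2$, the description \eqref{abnormalset} of $\abn(\G)$ gives the inclusion $\p\times\twowedge(\p)\subseteq\abn(\G)$, and therefore $\deu(\gamma(1),\abn(\G))\le\deu\!\left(\gamma(1),\p\times\twowedge(\p)\right)<\delta$. This contradicts the hypothesis $\deu(\gamma(1),\abn(\G))\ge\delta$, so no such $\p$ can exist; that is, for every $\p\in\gr(n,n-2)$ there is $t\in[0,1]$ with $\deu(\pi(\gamma(t)),\p)\ge C_4\delta$, which is the claim.

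There is essentially no real obstacle in this step: all the analytic content — propagating a uniform bound on the distance of the horizontal projection from $\p$ to a bound on the distance of the endpoint from $\p\times\twowedge(\p)$ — has already been carried out in \cref{distancewedge}. The corollary only repackages that estimate, uses the (dimension-$(n-2)$) case of \eqref{abnormalset} to pass from a single plane $\p$ to the whole abnormal set, and fixes the constant $C_4$ so that the resulting chain of inequalities is strictly incompatible with $\deu(\gamma(1),\abn(\G))\ge\delta$. The only thing to watch is the bookkeeping of strict inequalities, which is why $C_4=1/K$ already suffices.
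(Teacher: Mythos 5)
Your argument is correct and is essentially the paper's own proof: same choice $C_4 = 1/K$, same contradiction, same application of \cref{distancewedge} with $\varepsilon = C_4\delta$, and the same use of $\p\times\twowedge(\p)\subseteq\abn(\G)$ from \eqref{abnormalset} (the paper cites this via \eqref{distinf}). Nothing to add.
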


\begin{proof}
    Let $K$ be the constant from \cref{distancewedge}, we prove that the statement holds for $C_4 \coloneq 1/K$. Suppose, by contradiction, that there exists some $\p \in \gr(\R^n,n-2)$ for which $\deu(\pi(\gamma(t)),\p) < C_4\delta$ for every $t \in [0,1]$. Recall, from \eqref{abnormalset}, that $\p \times \twowedge (\p) \subseteq \abn(\G)$. Then, by \cref{distancewedge} with $\varepsilon\coloneq C_4\delta$, we get that $$\deu(\gamma(1),\abn(\G)) \stackrel{\eqref{distinf}}{\le} \deu\left(\gamma(1), \p \times \textstyle{\twowedge} (\p)\right) < KC_4\delta = \delta \, ,$$
    hence a contradiction. 
\end{proof}

\section{Minimal height and rectifiability of spheres} \label{section_5}

The consequence of \cref{abndist} can be informally rephrased by saying that the projection of each geodesic is not close to any vector subspace of co-dimension $2$. This property is closely related to the notion of minimal height of a configuration of points, which we next recall from \cite{blow}.

\subsection{Some remarks on minimal height}
Hereafter, if $(a_1,\dots,a_m)$ is a $m$-tuple of points in $\R^n$, we will denote by $\spn(a_1,\dots,a_m)$ the subspace generated by those vectors. We will denote by $(a_1,\dots,\widehat{a}_j,\dots,a_m)$ the $(m-1)$-tuple obtained from $(a_1,\dots,a_m)$ by removing the $j$-th element. 

\begin{definition}
The \emph{$m$-dimensional volume} of an $m$-tuple $(a_1,\dots,a_m)$ of vectors in the Euclidean space $(\R^n,d)$ is the $m$-dimensional Lebesgue measure $\mathcal{L}^m$ of the parallelotope generated by those vectors, i.e.,
\begin{equation*}
    \vol_m(a_1,\dots,a_m)\coloneq\mathcal{L}^m\left(\set{\lambda_1a_1+ \cdots + \lambda_m a_m : \lambda_i \in [0,1]}\right)
\end{equation*}
The \emph{minimal height} of $(a_1,\dots,a_m)$ is the smallest height of the parallelotope generated by those vectors, i.e.,
    \begin{equation*}
        \mh(a_1,\dots,a_m) \coloneq \min_{j \in \set{1,\dots,m}} d \left( a_j, \spn(a_1,\dots,\widehat{a}_j,\dots,a_m)\right) \, .
    \end{equation*}
Equivalently
\begin{align*}
        \mh(a_1,\dots,a_m) &= \min_{j \in \set{1,\dots,m}} \frac{\vol_m(a_1,\dots,a_m)}{\vol_{m-1}(a_1,\dots,\widehat{a}_j,\dots,a_m)} \\
        &= \frac{\vol_m(a_1,\dots,a_m)}{\max_{j \in \set{1,\dots,m}} \vol_{m-1}(a_1,\dots,\widehat{a}_j,\dots,a_m) }\,.
    \end{align*}
\end{definition} 

For the next result, one should think that $\Gamma$ is the projection, via the map $\pi$, of a geodesic. 

\begin{proposition} \label{mindelta}
    Let $\Gamma$ be a compact subset of the Euclidean space $(\R^n,d)$, $\delta>0$, and $m \in \N$ such that $\Gamma$ is not contained in the $\delta$-neighbourhood of any $\mathcal{P} \in \gr(n,m)$, i.e., for every $\mathcal{P} \in \gr(n,m)$ there exists $x \in \Gamma$ such that $d(x,P) \ge \delta$.
    Then there are $x_1,\dots,x_{m+1} \in \Gamma$ such that $\mh(x_1,\dots,x_{m+1})\ge \delta$.
\end{proposition}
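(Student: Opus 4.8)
The natural strategy is a greedy/inductive construction: build the tuple $x_1,\dots,x_{m+1}$ one vector at a time, at each stage choosing a new point of $\Gamma$ that is at distance at least $\delta$ from the span of the points already selected, and then argue that this greedy choice automatically guarantees the \emph{minimal height} (not just the height with respect to the last vector) is at least $\delta$. First I would set $x_1$ to be any point of $\Gamma$ with $\|x_1\|\ge\delta$, which exists because $\Gamma$ is not contained in the $\delta$-neighbourhood of $\{0\}\in\gr(n,0)$ — here one uses the hypothesis with the trivial subspace. Then, inductively, having chosen $x_1,\dots,x_k$ with $k\le m$, consider $\mathcal P:=\spn(x_1,\dots,x_k)$, which has dimension at most $k\le m$; since $\gr(n,j)$ for $j<m$ embeds into the hypothesis by padding $\mathcal P$ to an $m$-dimensional space containing it, the hypothesis supplies $x_{k+1}\in\Gamma$ with $d(x_{k+1},\mathcal P)\ge\delta$. (One subtlety: if $\dim\mathcal P<m$ I enlarge it arbitrarily to dimension exactly $m$; distance to the larger space is $\le$ distance to $\mathcal P$, so the point returned still satisfies $d(x_{k+1},\spn(x_1,\dots,x_k))\ge\delta$.) This produces $x_1,\dots,x_{m+1}\in\Gamma$.

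The second and more delicate step is to upgrade ``$d(x_{k+1},\spn(x_1,\dots,x_k))\ge\delta$ for all $k$'' to ``$\mh(x_1,\dots,x_{m+1})\ge\delta$'', i.e.\ to bound from below \emph{every} height $d(x_j,\spn(x_1,\dots,\widehat{x}_j,\dots,x_{m+1}))$, including $j<m+1$ where the omitted-span contains vectors chosen \emph{after} $x_j$. The clean way to do this is via the volume formulation of minimal height recorded just before the statement: $\mh(a_1,\dots,a_{m+1})=\vol_{m+1}(a_1,\dots,a_{m+1})/\max_j \vol_m(a_1,\dots,\widehat a_j,\dots,a_{m+1})$. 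The greedy construction gives the lower bound on the full volume
\begin{equation*}
\vol_{m+1}(x_1,\dots,x_{m+1}) = \prod_{k=0}^{m} d\bigl(x_{k+1},\spn(x_1,\dots,x_k)\bigr) \ge \delta^{m+1},
\end{equation*}
using the standard Gram–Schmidt/base-times-height factorization of parallelotope volume. For the denominator, I need an \emph{upper} bound $\vol_m(x_1,\dots,\widehat x_j,\dots,x_{m+1})\le \delta^{m}$ — but this is false in general, since the $x_i$ can be long. So the volume identity alone does not finish it, and I expect this to be the main obstacle: the naive greedy argument controls the numerator but not the denominator.

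The fix is to be smarter about the greedy choice, or to re-prove the claim directly. The cleanest route: show directly that for each fixed $j$, $d\bigl(x_j,\spn(x_1,\dots,\widehat x_j,\dots,x_{m+1})\bigr)\ge\delta$. Write $V_j:=\spn(x_1,\dots,\widehat x_j,\dots,x_{m+1})$. Decompose $x_j = p + h$ where $p$ is the orthogonal projection of $x_j$ onto $W:=\spn(x_1,\dots,x_{j-1})$ and $h\perp W$; by construction $\|h\|=d(x_j,W)\ge\delta$. The issue is that $V_j\supseteq W$ also contains $x_{j+1},\dots,x_{m+1}$, so I must show the component of $x_j$ orthogonal to all of $V_j$ still has norm $\ge\delta$. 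This requires knowing that $x_{j+1},\dots,x_{m+1}$ do not ``reach back'' toward $x_j$ — which is \emph{not} guaranteed by the distance-to-earlier-span conditions. Therefore I would instead strengthen the induction: at step $k+1$ choose $x_{k+1}$ maximizing $d(x_{k+1},\spn(x_1,\dots,x_k))$ over $x\in\Gamma$ — call this maximum $\delta_k\ge\delta$ — and prove by downward induction on $j$ that $\mh$ of any sub-tuple is controlled, or alternatively invoke a known lemma. Honestly, the robust and likely intended argument is: let $P:=\spn(x_1,\dots,x_m)$ (dimension $\le m$), extend to an $m$-plane $Q\supseteq P$, pick $x_{m+1}$ with $d(x_{m+1},Q)\ge\delta$ via the hypothesis; then for $j\le m$, $\spn(x_1,\dots,\widehat x_j,\dots,x_{m+1})\subseteq \widetilde Q$ where $\widetilde Q$ is an $m$-plane, and use the hypothesis \emph{again} applied to $\widetilde Q$ to locate a point of $\Gamma$ far from $\widetilde Q$ — but that point need not be one of our $x_i$. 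This indicates the clean proof must proceed by choosing all $m+1$ points \emph{simultaneously} as a near-maximizer of $\vol_{m+1}$ over $\Gamma^{m+1}$: pick $(x_1,\dots,x_{m+1})\in\Gamma^{m+1}$ with $\vol_{m+1}$ within a factor $2$ of the supremum (possible since $\Gamma$ is compact and $\vol_{m+1}$ continuous; in fact the sup is attained). Then for each $j$, swapping $x_j$ for the point $y\in\Gamma$ with $d(y,V_j)\ge\delta$ (provided by the hypothesis, since $\dim V_j\le m$) gives $\vol_{m+1}(x_1,\dots,y,\dots,x_{m+1}) = d(y,V_j)\cdot\vol_m(\dots\widehat x_j\dots)\ge \delta\cdot\vol_m(\dots\widehat x_j\dots)$, while near-optimality gives $\vol_{m+1}(x_1,\dots,x_{m+1})\ge \tfrac12\vol_{m+1}(x_1,\dots,y,\dots,x_{m+1})$, hence $d(x_j,V_j)=\vol_{m+1}(x_1,\dots,x_{m+1})/\vol_m(\dots\widehat x_j\dots)\ge\delta/2$. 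If the supremum is attained (which it is, by compactness) the factor $\tfrac12$ can be removed and one gets $\mh\ge\delta$ exactly. I would carry out this last version: it is short, and the only point needing care is that when the maximizing tuple is degenerate ($\vol_{m+1}=0$) one must first argue non-degeneracy — but that itself follows from the hypothesis applied to $\gr(n,m)$ containing $\spn(x_1,\dots,x_{m+1})$, which forces the existence of a tuple of positive volume, so the maximizer is non-degenerate.
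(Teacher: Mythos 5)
Your final argument is correct, and it is close in spirit to the paper's but differs in one meaningful choice: you maximize $\vol_{m+1}$ over $(m+1)$-tuples in $\Gamma^{m+1}$ and then, for each $j$, swap $x_j$ for a point $y$ provided by the hypothesis (far from $V_j:=\spn(x_1,\dots,\widehat x_j,\dots,x_{m+1})$) and compare $\vol_{m+1}$ of the two tuples. The paper instead maximizes $\vol_m$ over $m$-tuples in $\Gamma^m$ to get $(x_1,\dots,x_m)$, invokes the hypothesis \emph{once} to obtain a far point $x_{m+1}$ with $d(x_{m+1},\spn(x_1,\dots,x_m))\ge\delta$, and then observes that for every $j$ the $m$-tuple $(x_1,\dots,\widehat x_j,\dots,x_{m+1})$ is itself a competitor in $\Gamma^m$, so $\vol_m(x_1,\dots,\widehat x_j,\dots,x_{m+1})\le\vol_m(x_1,\dots,x_m)$; the height bound then falls out of the minimal-height-as-ratio-of-volumes formula with no swapping. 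The paper's version is slightly more economical (one application of the hypothesis instead of $m+1$, and maximization over $\Gamma^m$ rather than $\Gamma^{m+1}$), but both are sound extremal-volume arguments. Two minor notes on your write-up: the initial greedy attempt is correctly diagnosed as failing, but it occupies most of the proposal; and your non-degeneracy remark should be phrased as: if every $(m+1)$-tuple in $\Gamma$ had zero volume then $\Gamma$ would lie in some $\mathcal P\in\gr(n,m)$, hence in its $\delta$-neighbourhood, contradicting the hypothesis — the way you wrote it (``$\gr(n,m)$ containing $\spn(x_1,\dots,x_{m+1})$'') is garbled, though the intent is clear.
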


\begin{proof}
    We firstly claim that there exists an $m$-tuple $(y_1,\dots,y_m)$ of points in $\Gamma$ such that $\vol_m(y_1,\dots,y_m)>0$. Indeed, by assumption $\spn(\Gamma)$ must have dimension grater than $m$. Hence, any linearly independent points $y_1,\dots,y_m$ in $\Gamma$ span positive $m$-volume. 
    
    Consider $(x_1,\dots,x_m)\in \Gamma^m$ that maximizes $\vol_m$ among all $m$-tuples in $\Gamma^m$, which exists by compactness and in particular $\vol_m(x_1,\dots,x_m)>0$. By hypothesis there exists $x_{m+1} \in \Gamma$ such that $d\left(x_{m+1},\spn(x_1,\dots,x_m)\right) \ge \delta$, i.e.,
    \begin{equation*}
      \frac{\vol_{m+1}(x_1,\dots,x_{m+1})}{\vol_m(x_1,\dots,x_m)}\ge \delta \, .  
    \end{equation*}
    For every $j\in \set{1,\dots,m+1}$, since $\vol_m(x_1,\dots,x_m)$ is maximal, we have
    \begin{align*}
        d(x_j,\spn(x_1,\dots,\widehat{x}_j,\dots,x_{m+1}) = \frac{\vol_{m+1}(x_1,\dots,x_{m+1})}{\vol_m(x_1,\dots,\widehat{x}_j,\dots,x_{m+1})}  \ge \frac{\vol_{m+1}(x_1,\dots,x_{m+1})}{\vol_m(x_1,\dots,x_m)}\ge \delta \, .
    \end{align*}
    This implies that $\mh(x_1,\dots,x_{m+1})\ge \delta$.
\end{proof}

\begin{proposition} \label{minspan}
    Let $a_1,\dots,a_m$ be vectors in $\R^n$, equipped with the Euclidean norm $\lVert \cdot \rVert$. Suppose that $\mh(a_1,\dots,a_m)>0$, then every $w \in \spn(a_1,\dots,a_m)$ can be written as $w = \lambda_1a_1 + \cdots + \lambda_m a_m$ with
    \begin{equation*}
         \lvert \lambda_j \rvert \le \frac{\lVert w \rVert}{\mh(a_1,\dots,a_m)} \, , \qquad \text{for every $j\in\set{1,\dots,m}$} \, .
    \end{equation*}
\end{proposition}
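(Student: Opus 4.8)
The plan is to exploit the geometric meaning of the minimal height directly via orthogonal projections. First I would observe that the hypothesis $\mh(a_1,\dots,a_m)>0$ forces the vectors $a_1,\dots,a_m$ to be linearly independent: indeed, for each $j$ the quantity $d(a_j,\spn(a_1,\dots,\widehat a_j,\dots,a_m))$ is positive, so $a_j$ does not lie in the span of the others, which is exactly linear independence. In particular, every $w\in\spn(a_1,\dots,a_m)$ admits a \emph{unique} expression $w=\lambda_1 a_1+\dots+\lambda_m a_m$, so it only remains to estimate the coefficients.

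Next, fix $j\in\set{1,\dots,m}$ and set $V_j\coloneq\spn(a_1,\dots,\widehat a_j,\dots,a_m)\le\R^n$. Let $P_j\colon\R^n\to\R^n$ be the orthogonal projection onto the orthogonal complement $V_j^\perp$. By construction $P_j a_i=0$ for every $i\neq j$, hence applying $P_j$ to $w=\sum_{i=1}^m\lambda_i a_i$ yields $P_j w=\lambda_j P_j a_j$. Moreover, by definition of the distance from a subspace, $\lVert P_j a_j\rVert=d(a_j,V_j)\ge\mh(a_1,\dots,a_m)$, and this is strictly positive, so in particular $P_j a_j\neq 0$.

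Finally I would combine these with the fact that an orthogonal projection is $1$-Lipschitz: taking norms in $P_j w=\lambda_j P_j a_j$ gives
\begin{equation*}
    \lvert\lambda_j\rvert\cdot\mh(a_1,\dots,a_m)\le\lvert\lambda_j\rvert\cdot\lVert P_j a_j\rVert=\lVert P_j w\rVert\le\lVert w\rVert\,,
\end{equation*}
and dividing by $\mh(a_1,\dots,a_m)>0$ produces the desired bound $\lvert\lambda_j\rvert\le\lVert w\rVert/\mh(a_1,\dots,a_m)$. Since $j$ was arbitrary, this completes the argument. There is essentially no serious obstacle here — the only point requiring a moment's care is justifying that the decomposition exists and is unique, i.e.\ that $\mh>0$ implies linear independence, after which the projection estimate is immediate.
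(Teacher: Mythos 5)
Your proof is correct and is essentially the same as the paper's: the key inequality $\lvert\lambda_j\rvert\cdot\mh(a_1,\dots,a_m)\le\lVert w\rVert$ is obtained in both cases by comparing $\lVert w\rVert$ with the distance from $w$ to $\spn(a_1,\dots,\widehat a_j,\dots,a_m)$, which factors as $\lvert\lambda_j\rvert\cdot d(a_j,\spn(a_1,\dots,\widehat a_j,\dots,a_m))$. You merely phrase the distance-to-subspace via the explicit orthogonal projection $P_j$, which is a notational rather than substantive difference.
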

\begin{proof}
    Since $\mh(a_1,\dots,a_m)>0$, then $a_1,\dots,a_m$ are linearly independent and therefore every $w \in \spn(a_1,\dots,a_m)$ can be uniquely written as $w = \lambda_1a_1 + \cdots + \lambda_m a_m$. Let us now fix $j \in \set{1,\dots,m}$, we then have 
    \begin{align*}
        \lVert w \rVert &\ge d(w,\spn(a_1,\dots,\widehat{a}_j,\dots,a_m)     =d(\lambda_ja_j,\spn(a_1,\dots,\widehat{a}_j,\dots,a_m) \\
        &= \lvert \lambda_j \rvert \cdot d(a_j,\spn(a_1,\dots,\widehat{a}_j,\dots,a_m) \ge \lvert \lambda_j \rvert \cdot \mh(a_1,\dots,a_m) \, .
    \end{align*}
It follows that $\lvert \lambda_j \rvert \le \lVert w \rVert \cdot \mh(a_1,\dots,a_m)^{-1}$.
\end{proof}



The following is the key result:

\begin{proposition} \label{lippow}
    Let $\left(\G,\lVert \cdot \rVert_{\mathrm{sf}}\right)$ be a sub-Finsler free-Carnot group of step $2$ and rank $n$, denote by $\dcc$ its Carnot-Carathéodory distance. Fix on $\G$ an adequate scalar product and let $\deu$ denote the induced distance. Consider the sphere \eqref{def_seu} and its subsets \eqref{def_A_delta}. Then there exist a constant $C_5 \ge 1$ and a number $\delta_0 >0$ such that, for every $\delta \in (0,\delta_0)$, $g \in \sfeu \setminus A_\delta$, and $h \in \sfeu$, it holds
    \begin{equation*}
         \dcc(0_\G,h) \le \dcc(0_\G,g) +    \frac{C_5}{\delta} \cdot \deu(g,h) \, . 
    \end{equation*}
    In particular, the function $\dcc(0_\G,\cdot)$ is $\left(C_5\delta^{-1}\right)$-Lipschitz with respect to $\deu$ when restricted to $\sfeu \setminus A_\delta$.
\end{proposition}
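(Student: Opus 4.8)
The plan is to estimate the sub-Finsler distance $\dcc(0_\G,h)$ by building a competitor horizontal curve that first follows the optimal geodesic $\gamma$ from $0_\G$ to $g$ and then appends a short correction path reaching $h$. The length of the correction path must be controlled by $C_5\delta^{-1}\deu(g,h)$. The key structural input is \cref{abndist}: since $g\in\sfeu\setminus A_\delta$, for the optimal geodesic $\gamma$ from $0_\G$ to $g$, parametrized by a constant-speed control $u$, its projection $\Gamma\coloneq\pi(\gamma([0,1]))$ is not contained in the $C_4\delta$-neighbourhood of any $\mathcal{P}\in\gr(n,n-2)$. Applying \cref{mindelta} with $m=n-2$ yields points $x_1,\dots,x_{n-1}\in\Gamma$ with $\mh(x_1,\dots,x_{n-1})\ge C_4\delta$; write $x_i=\pi(\gamma(t_i))$ for suitable times $t_i\in[0,1]$.

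\textbf{Main steps.} First I would reduce to the case where $g$ and $h$ are close, say $\deu(g,h)$ smaller than some fixed threshold (otherwise the estimate follows from the boundedness \eqref{costantM} of $\dcc$ on $\sfeu$ after enlarging $C_5$). Next, writing $g=(g_1,g_2)$ and $h=(h_1,h_2)$ in coordinates on $\R^n\times\twowedge(\R^n)$, one has $\dcc(g,h)=\dcc(0_\G,g^{-1}*h)$ by left-invariance, and $g^{-1}*h$ has first component $h_1-g_1$ (of Euclidean norm at most $\deu(g,h)$) and second component of the form $h_2-g_2-\tfrac12 g_1\wedge h_1$, which need not be small. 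The strategy is: (i) correct the first component cheaply by a straight segment of length $\le C_1\deu(g,h)$ using \eqref{c1_estiamate}; (ii) correct the remaining second-layer error $Z\in\twowedge(\R^n)$ using a loop in $\R^n$ that produces the prescribed area. Here is where the minimal-height data enters: the geodesic already "spans area" in the directions $x_i\wedge x_j$, and by \cref{minspan} the span $\spn(x_1,\dots,x_{n-1})$ has codimension $1$ in $\R^n$, so $\twowedge(\spn(x_1,\dots,x_{n-1}))$ together with a single extra direction spans $\twowedge(\R^n)$; one represents $Z$ in a basis of such bivectors with coefficients bounded by $\mh^{-1}$-type quantities, i.e.\ by $C\delta^{-1}\|Z\|_{\mathrm{eu}}$, and realizes each basis bivector $x_i\wedge x_j$ by a small parallelogram loop of length $\approx\sqrt{\text{area}}$. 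The crucial quantitative point is that $\|Z\|_{\mathrm{eu}}\lesssim\deu(g,h)$: indeed $Z=h_2-g_2-\tfrac12 g_1\wedge h_1$, and since $g,h\in\sfeu$ one can show $\|Z\|_{\mathrm{eu}}$ is comparable to the Euclidean distance between $g$ and $h$ up to a bounded factor (using $\|g_1\|_{\mathrm{eu}},\|h_1\|_{\mathrm{eu}}\le1$ and \eqref{wedge_norm_ineq}), so the loop contributing area $\le\|Z\|_{\mathrm{eu}}$ has length $\lesssim\sqrt{\deu(g,h)}$ — but wait, we need the linear-in-$\deu(g,h)$ bound, so instead one should distribute the area correction across all available pairs and use \cref{minspan} to keep the coefficients $O(\delta^{-1})$, giving total correction length $O(\delta^{-1}\deu(g,h))$ when $\deu(g,h)$ is small (so that $\sqrt{\deu(g,h)}\le\deu(g,h)^{1/2}$ is dominated appropriately, or by rescaling the loops so their area-to-length ratio is linear).

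\textbf{The hard part} will be the quantitative construction of the correcting loop and the bookkeeping that its length is genuinely $O(\delta^{-1}\deu(g,h))$ rather than $O(\delta^{-1}\sqrt{\deu(g,h)})$ or $O(\delta^{-1}\deu(g,h)^{1/2})$; the resolution is that a parallelogram loop with sides $a$ and $\varepsilon b$ (with $\|a\|,\|b\|$ of unit order) has length $O(\varepsilon)$ and encloses area $O(\varepsilon)$, so for a fixed target bivector $x_i\wedge x_j$ one scales $\varepsilon\sim\delta^{-1}\|Z\|_{\mathrm{eu}}\sim\delta^{-1}\deu(g,h)$ and the length is linear in $\delta^{-1}\deu(g,h)$, while keeping $x_i$ as the "long" side of the parallelogram so that the minimal-height lower bound $\mh\ge C_4\delta$ guarantees the coefficient expansion in \cref{minspan} does not blow up. One also needs the geometric fact that $\abn(\G)$ avoidance via \cref{abndist} produces $n-1$ vectors whose $\binom{n-1}{2}$ pairwise wedges, together with one more, span all of $\twowedge(\R^n)$ — this follows because $\spn(x_1,\dots,x_{n-1})$ is a hyperplane and $\dim\twowedge(\mathbb{R}^{n-1})=\binom{n-1}{2}=\dim\twowedge(\R^n)-(n-1)$, so the missing $n-1$ dimensions are spanned by $x_i\wedge e$ for a transversal direction $e$, each again realized by a unit-order loop scaled by $\varepsilon\sim\delta^{-1}\deu(g,h)$. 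Assembling: concatenating the geodesic to $g$ (length $\dcc(0_\G,g)$), the first-layer segment (length $O(\deu(g,h))$), and the finitely many area-correcting loops (total length $O(\delta^{-1}\deu(g,h))$) yields a horizontal path from $0_\G$ to $h$, hence $\dcc(0_\G,h)\le\dcc(0_\G,g)+C_5\delta^{-1}\deu(g,h)$; swapping the roles of $g$ and $h$ is not needed since only $g\notin A_\delta$ is assumed, but the Lipschitz conclusion on $\sfeu\setminus A_\delta$ follows by applying the inequality with both endpoints outside $A_\delta$ and symmetrizing.
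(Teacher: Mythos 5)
Your reduction steps (using \cref{abndist} and \cref{mindelta} to extract $x_1,\dots,x_{n-1}$ along $\pi\circ\gamma$ with $\mh\ge C_4\delta$, then writing $h = Z * g * q$ and bounding $\|q\|_\mathrm{eu}$ and $\|Z\|_\mathrm{eu}$ by $O(\deu(g,h))$) match the paper and are sound. The gap is in the correcting step. You propose to reach $Z*g$ by first arriving at $g$ and then appending parallelogram loops, one side $x_i$ of unit order and the other $\varepsilon v_j$, and claim each such loop has length $O(\varepsilon)$. That is false: the perimeter of a parallelogram with sides $x_i$ and $\varepsilon v_j$ is $2\|x_i\|+2\varepsilon\|v_j\| = O(1)$, not $O(\varepsilon)$. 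Consequently a correction appended after $g$ costs $O(1)$ per bivector, which is useless when $\deu(g,h)\to 0$; and no appended loop can do better, since the group's isoperimetric inequality forces any closed horizontal curve enclosing area $\approx\|Z\|_\mathrm{eu}$ to have length $\gtrsim \sqrt{\|Z\|_\mathrm{eu}}\approx\sqrt{\deu(g,h)}\gg\deu(g,h)$. The heuristic you gesture at (``keeping $x_i$ as the long side'') is the right intuition but cannot be realized by concatenation at the endpoint, because after you have reached $g$ you are no longer ``at'' $x_i$: traveling back to it already costs $O(1)$.

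What actually makes the length contribution linear in $\|Z\|_\mathrm{eu}/\delta$ is that the geodesic must be \emph{modified in the middle}, not supplemented at the end. This is the content of \cref{lemmasize2}: one writes $Z=\sum_j w_j\wedge(x_j-x_{j-1})$ with $\|w_j\|_\mathrm{eu}\lesssim\|Z\|_\mathrm{eu}/\varepsilon$ (via \cref{lemmawedge} and \cref{minspan}), and then conjugates each segment of the geodesic by a straight line, using the algebraic identity $w_j * (g_{j-1}^{-1}g_j) * (-w_j) = g_{j-1}^{-1}g_j * \bigl(w_j\wedge(x_j-x_{j-1})\bigr)$. The added cost is exactly the two straight segments $\pm w_j$, i.e.\ $2C_1\|w_j\|_\mathrm{eu}$ per conjugation, while the area produced is $w_j\wedge(x_j-x_{j-1})$ — linear in $\|w_j\|$ because the ``long'' displacement $x_j-x_{j-1}$ is already paid for by the original geodesic. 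Summing over $j$ gives the desired $O(\delta^{-1}\|Z\|_\mathrm{eu})$ bound. Your proof needs this ``splice into the geodesic'' construction; as written the append-a-loop construction does not close.
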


Before the proof of \cref{lippow}, which is in \cref{proofs}, we need two preparatory lemmas:

\begin{lemma} \label{lemmawedge}
Consider on $\G$, defined as in \eqref{def_G}, an adequate scalar product and let $\lVert \cdot \rVert_{\mathrm{eu}}$ be the induced norm. Then there exists a constant $K_1$ such that, for every $(n-1)$-tuple $(x_1,\dots,x_{n-1})$ of points in $\R^n$, $Y \in \twowedge(\R^n)$, and $\varepsilon>0$ satisfying $$\mh(x_1,\dots,x_{n-1})\ge \varepsilon\, ,$$ there exist $v_1,\dots,v_{n-1} \in \R^n$ such that
 \begin{equation*}
        Y = \sum_{j=1}^{n-1} x_j \wedge v_j \qquad \text{and} \qquad \rVert v_j \lVert_{\mathrm{eu}} \le K_1\cdot\frac{\rVert Y \lVert_{\mathrm{eu}}}{\varepsilon} \, , \quad \text{for every $j \in \set{1,\dots,n-1}$.}
    \end{equation*}
\end{lemma}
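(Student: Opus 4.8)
The plan is to solve the linear equation $Y=\sum_{j=1}^{n-1}x_j\wedge v_j$ in \emph{one step}, via a dual‑basis identity, rather than by recursively peeling off one $x_j$ at a time: the naive induction on $n$ produces constants growing with the (uncontrolled) norms $\lVert x_j\rVert_{\mathrm{eu}}$, whereas the statement demands a bound depending only on $\varepsilon$. Since $\mh(x_1,\dots,x_{n-1})\ge\varepsilon>0$, the vectors $x_1,\dots,x_{n-1}$ are linearly independent and span an $(n-1)$-dimensional subspace $\mathcal{P}\le\R^n$. Using the restriction to $\R^n$ of the fixed adequate scalar product, I fix a unit vector $x_n$ spanning the line $\mathcal{P}^{\perp_1}$, so that $(x_1,\dots,x_n)$ is a basis of $\R^n$; let $(\xi_1,\dots,\xi_n)$ be its dual basis with respect to the scalar product, i.e.\ $\langle\xi_i,x_j\rangle=\delta_{ij}$. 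Writing $\iota_\xi$ for the interior product induced by the scalar product, $\iota_\xi(u\wedge w):=\langle\xi,u\rangle w-\langle\xi,w\rangle u$, the key algebraic identity is $Y=\tfrac12\sum_{k=1}^{n}x_k\wedge\iota_{\xi_k}Y$ for every $Y\in\twowedge(\R^n)$, which is verified directly on the basis elements $x_p\wedge x_q$ (both sides are linear in $Y$).

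Only the $k=n$ term is not yet of the desired form. Setting $w:=\tfrac12\iota_{\xi_n}Y$ and splitting $w=w^\parallel+w^\perp$ with $w^\parallel\in\mathcal{P}$ and $w^\perp\in\R x_n$, one has $x_n\wedge w=x_n\wedge w^\parallel$ since $x_n\wedge w^\perp=0$. Applying \cref{minspan} to $w^\parallel\in\spn(x_1,\dots,x_{n-1})$ gives $w^\parallel=\sum_{j=1}^{n-1}\mu_j x_j$ with $|\mu_j|\le\lVert w^\parallel\rVert_{\mathrm{eu}}/\mh(x_1,\dots,x_{n-1})$, hence $x_n\wedge w^\parallel=-\sum_{j=1}^{n-1}\mu_j\,x_j\wedge x_n$. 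Substituting into the identity yields $Y=\sum_{j=1}^{n-1}x_j\wedge v_j$ with $v_j:=\tfrac12\iota_{\xi_j}Y-\mu_j x_n$.

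For the norm estimates I use two elementary facts. First, $\lVert\iota_\xi Y\rVert_{\mathrm{eu}}\le\lVert\xi\rVert_{\mathrm{eu}}\lVert Y\rVert_{\mathrm{eu}}$: in an orthonormal basis $\iota_\xi Y$ is represented by $\widetilde{Y}\xi$, where $\widetilde{Y}$ is the skew‑symmetric matrix of $Y$, whose operator norm is at most $2^{-1/2}$ times its Frobenius norm, and the Frobenius norm of $\widetilde{Y}$ equals $\sqrt2\,\lVert Y\rVert_{\mathrm{eu}}$ because \eqref{adequate} makes the $e_a\wedge e_b$ orthonormal. Second, $\xi_j\perp x_k$ for all $k\ne j$; in particular $\xi_j\perp x_n$, so $\xi_j\in\mathcal{P}$, and decomposing $x_j$ orthogonally along $\R\xi_j$ and $\spn(x_k:k\le n-1,\ k\ne j)$ the relation $\langle\xi_j,x_j\rangle=1$ gives $\lVert\xi_j\rVert_{\mathrm{eu}}=d\big(x_j,\spn(x_k:k\le n-1,\ k\ne j)\big)^{-1}\le\varepsilon^{-1}$ for $j\le n-1$, while $\xi_n\perp\mathcal{P}$ forces $\xi_n=x_n$ and $\lVert\xi_n\rVert_{\mathrm{eu}}=1$, so $\lVert w^\parallel\rVert_{\mathrm{eu}}\le\lVert w\rVert_{\mathrm{eu}}\le\tfrac12\lVert Y\rVert_{\mathrm{eu}}$. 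Combining, $\lVert v_j\rVert_{\mathrm{eu}}\le\tfrac12\lVert\xi_j\rVert_{\mathrm{eu}}\lVert Y\rVert_{\mathrm{eu}}+|\mu_j|\le\varepsilon^{-1}\lVert Y\rVert_{\mathrm{eu}}$, so the lemma holds with $K_1=1$. The only genuine obstacle is conceptual: the bound must be uniform over all $(n-1)$-tuples with $\mh\ge\varepsilon$, with no control on $\lVert x_j\rVert_{\mathrm{eu}}$, and this is exactly what rules out the recursive argument and singles out the dual‑basis decomposition — in which the uncontrolled norms $\lVert x_j\rVert_{\mathrm{eu}}$ enter only through the lower bound $\mh\ge\varepsilon$ and through the unit vector $x_n$ — as the natural approach; everything else is routine linear algebra.
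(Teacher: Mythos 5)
Your proof is correct, and it takes a genuinely different route from the paper's. The paper works with the set $E$ of all sums $\sum_j x_j\wedge v_j$ with $\lVert v_j\rVert_{\mathrm{eu}}\le\varepsilon^{-1}$; it observes that $E$ is symmetric and convex, picks an orthonormal basis $(e_1,\dots,e_n)$ of $\R^n$ with $(e_1,\dots,e_{n-1})$ spanning $\spn(x_1,\dots,x_{n-1})$, and shows via \cref{minspan} (applied to the unit vectors $e_i$, $i\le n-1$) that each $e_i\wedge e_j$ lies in $E$. Since a symmetric convex set containing $\pm$ an orthonormal basis of a $d$-dimensional Euclidean space contains a ball of radius $1/\sqrt{d}$, this yields $K_1=\sqrt{n(n-1)/2}$. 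You instead solve $Y=\sum_j x_j\wedge v_j$ explicitly: you form the dual basis $(\xi_k)$ of $(x_1,\dots,x_{n-1},x_n)$, invoke the contraction identity $Y=\tfrac12\sum_k x_k\wedge\iota_{\xi_k}Y$, and absorb the lone $k=n$ term by projecting $\iota_{\xi_n}Y$ onto $\mathcal{P}$ and applying \cref{minspan} there; the estimates $\lVert\xi_j\rVert_{\mathrm{eu}}\le\varepsilon^{-1}$ for $j\le n-1$, $\xi_n=x_n$, and $\lVert\iota_\xi Y\rVert_{\mathrm{eu}}\le\lVert\xi\rVert_{\mathrm{eu}}\lVert Y\rVert_{\mathrm{eu}}$ (the skew-symmetric operator-norm/Frobenius bound, using that the adequate product makes $\{e_a\wedge e_b\}_{a<b}$ orthonormal) then give the conclusion directly. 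Both routes hinge on \cref{minspan} and on the same structural consequences of the adequate scalar product; yours is more constructive and delivers the sharper constant $K_1=1$, while the paper's is shorter once the convexity fact is granted. Either constant serves equally well in \cref{lemmasize2}.
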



\begin{proof}
    Fix $\varepsilon>0$ and an $(n-1)$-tuple $(x_1,\dots,x_{n-1})$ as in the assumptions and define
    \begin{equation*}
        E \coloneq \Set{\sum_{j=1}^{n-1} x_j \wedge v_j \, \colon \, \text{$v_j \in \R^n$ and $\lVert v_j \rVert_{\mathrm{eu}} \le \frac{1}{\varepsilon}$, for every $j \in \set{1,\dots,n-1}$}} \subseteq \twowedge\left(\R^n\right) \, .
    \end{equation*}
    We claim that it is sufficient to prove that $E$ contains an Euclidean ball in $\twowedge\left(\R^n\right)$ centered at zero with radius independent of $(x_1,\dots,x_{n-1})$ and $\varepsilon$. Indeed, if $B_\mathrm{eu}(0,r) \subseteq E$ and $0 \neq Y \in \twowedge\left(\R^n\right)$, then $\lambda Y \in E$ for $\lambda \coloneq r  \rVert Y \lVert_{\mathrm{eu}}^{-1}>0$, hence
    \begin{equation*}
        \lambda Y = \sum_{j=1}^{n-1} x_j \wedge v_j \qquad \text{for some $v_1,\dots,v_{n-1}$ such that $\lVert v_j \rVert_{\mathrm{eu}} \le \varepsilon^{-1}$.} 
    \end{equation*}
    It follows that
    \begin{equation*}
        Y = \sum_{j=1}^{n-1} x_j \wedge \frac{v_j}{\lambda} \qquad \text{for some $v_1,\dots,v_{n-1}$ such that $\left\lVert \frac{v_j}{\lambda} \right\rVert_{\mathrm{eu}} \le \frac{\rVert Y \lVert_{\mathrm{eu}}}{r \cdot \varepsilon}\, ,$}
    \end{equation*}
    and the proposition is proved by setting $K_1 \coloneq r^{-1}$. 
    

Fix an orthonormal basis $(e_1,\dots,e_n)$ of $\R^n$ such that $(e_1,\dots,e_{n-1})$ is an orthonormal basis of $\spn(x_1,\dots,x_{n-1})$. We claim that $\set{ e_i \wedge e_j \, | \, 1 \le i < j \le n }$, which is an orthonormal basis of $\twowedge \left(\R^n \right)$, is contained in $E$. Indeed, fix $Y\coloneq e_i \wedge e_j$ with $i<j$, then $i \le n-1$ and therefore $e_i \in \spn(x_1,\dots,x_{n-1})$. We recall that $\mh(x_1,\dots,x_{n-1})\ge \varepsilon$, therefore, by \cref{minspan}, we have that $e_i = a_i^1 x_1 + \dots + a_i^{n-1} x_{n-1}$ and
\begin{equation*}
    \left\lvert a_i^k \right\rvert \le \varepsilon^{-1}\left\lVert e_i \right\rVert_{\mathrm{eu}} = \varepsilon^{-1} \qquad \text{for every $k=1,\dots,n-1$} \, .
\end{equation*}
We can then write 
\begin{equation*}
    Y = e_i \wedge e_j = \sum_{k=1}^{n-1} \left(a_i^k x_k \right) \wedge e_j = \sum_{k=1}^{n-1} x_k \wedge \left(a_i^k e_j\right) \, ,
\end{equation*}
so that $Y = \sum_{k=1}^{n-1} x_k \wedge v_k$ with $v_k \coloneq a_i^k e_j$ and
\begin{equation*}
    \lVert v_k \rVert_{\mathrm{eu}} \le \left\lvert a_i^k \right\rvert \cdot \left\lVert e_j \right\rVert_{\mathrm{eu}} \le \varepsilon^{-1} \, .
\end{equation*}
We stress that $E$ is the image, through a linear map, of a symmetric and convex set. We infer that $E$  symmetric, i.e., $E=-E$, and convex as well. It is a general fact in Euclidean geometry that the convex hull of a symmetrised orthonormal basis contains an Euclidean ball of radius $1/\sqrt{d}$ where $d$ is the dimension of the space. 
\end{proof}


\begin{lemma} \label{lemmasize2}
    Let $\left(\G,\lVert \cdot \rVert_{\mathrm{sf}}\right)$ be a sub-Finsler free-Carnot group of step $2$ and rank $n$ as in \eqref{def_G}, denote by $\dcc$ its Carnot-Carathéodory distance. Fix on $\G$ an adequate scalar product and let $\lVert \cdot \rVert_{\mathrm{eu}}$ denote the induced norm. Then there exists a constant $K_2 > 0$ with the following property: 
    
    For every $\varepsilon>0$, every $Z \in \twowedge (\R^n)$, and every $n$-tuple $(g_0,\dots,g_{n-1}) \in \G^n$ such that $g_0=0_\G$ and  $\mh(\pi(g_1), \dots, \pi(g_{n-1}))\ge \varepsilon$, the following inequality holds:
    \begin{equation*}
        \dcc(g_0,Z * g_{n-1}) \le K_2 \cdot \frac{\lVert Z \rVert_{\mathrm{eu}}}{\varepsilon} + \sum_{i=1}^{n-1} \dcc(g_{j-1},g_j) \, .
    \end{equation*}
\end{lemma}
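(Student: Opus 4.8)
The plan is to construct a single horizontal curve from $0_\G$ to $(0,Z)*g_{n-1}$ realizing the asserted bound, built by deforming the concatenation of geodesics through the points $g_0,\dots,g_{n-1}$.

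First, since $\mh(\pi(g_1),\dots,\pi(g_{n-1}))\ge\varepsilon$, I apply \cref{lemmawedge} to the $(n-1)$-tuple $(\pi(g_1),\dots,\pi(g_{n-1}))$ and to the tensor $Z$: this produces vectors $v_1,\dots,v_{n-1}\in\R^n$ with $Z=\sum_{j=1}^{n-1}\pi(g_j)\wedge v_j$ and $\lVert v_j\rVert_{\mathrm{eu}}\le K_1\lVert Z\rVert_{\mathrm{eu}}/\varepsilon$ for every $j$. Next, for each $j$ I fix a geodesic from $g_{j-1}$ to $g_j$ and let $\sigma_0\colon[0,1]\to\R^n$ be the concatenation of the $\pi$-projections of these geodesics. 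Then $\sigma_0$ starts at $\pi(g_0)=0$, passes successively through $\pi(g_1),\dots,\pi(g_{n-1})$, its horizontal lift issuing from $0_\G$ is precisely that concatenation of geodesics (hence ends at $g_{n-1}$), and its $\lVert\cdot\rVert_{\mathrm{sf}}$-length equals $\sum_{j=1}^{n-1}\dcc(g_{j-1},g_j)$, because by \eqref{curve} the sub-Finsler length of a horizontal curve coincides with the $\lVert\cdot\rVert_{\mathrm{sf}}$-length of its projection under $\pi$.

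I then deform $\sigma_0$ in $n-1$ successive steps. In step $j$ I take the maximal initial arc of the current curve ending at the point $\pi(g_j)$, translate this arc in $\R^n$ by the vector $-v_j$, and reconnect it to the remainder of the curve by inserting the two straight segments from $0$ to $-v_j$ and from $\pi(g_j)-v_j$ to $\pi(g_j)$. A translation being a Euclidean isometry, step $j$ increases the $\lVert\cdot\rVert_{\mathrm{sf}}$-length by exactly $2\lVert v_j\rVert_{\mathrm{sf}}$. Hence the final curve $\sigma$ has $\lVert\cdot\rVert_{\mathrm{sf}}$-length at most $\sum_{j=1}^{n-1}\dcc(g_{j-1},g_j)+2\sum_{j=1}^{n-1}\lVert v_j\rVert_{\mathrm{sf}}$, which by \eqref{constantc1} is at most $\sum_{j=1}^{n-1}\dcc(g_{j-1},g_j)+2(n-1)C_1K_1\lVert Z\rVert_{\mathrm{eu}}/\varepsilon$.

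The remaining point, and the one I expect to require the most care, is the bookkeeping for the endpoint of the lift. The model computation, carried out directly from the product law \eqref{prod}, is: if a base curve $\tau$ from $0$ to a point $p\in\R^n$ has horizontal lift from $0_\G$ ending at $(p,W)$, then the curve obtained by prepending the segment from $0$ to $-v$, then the translate of $\tau$ by $-v$, then the segment from $p-v$ to $p$, has horizontal lift from $0_\G$ ending at $(p,\,W+p\wedge v)$; the reason is that two horizontal lifts of one base curve differ by a left translation and that $p\wedge v\in\twowedge(\R^n)$ is central in $\G$, so the central correction created near the start is carried unchanged to the endpoint. Applying this with $p=\pi(g_j)$, $v=v_j$ at the $j$-th step and iterating, the horizontal lift of $\sigma$ from $0_\G$ ends at $\bigl(\pi(g_{n-1}),\,Y_{n-1}+\sum_{j=1}^{n-1}\pi(g_j)\wedge v_j\bigr)=(0,Z)*g_{n-1}$, where $g_{n-1}=(\pi(g_{n-1}),Y_{n-1})$. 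Since $\dcc(g_0,Z*g_{n-1})$ does not exceed the length of this curve, we conclude with $K_2\coloneq 2(n-1)C_1K_1$.
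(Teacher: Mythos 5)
Your proof is correct, and it takes a genuinely different route from the paper's. The paper works purely algebraically in the group: after obtaining $Z=\sum_{j=1}^{n-1}x_j\wedge v_j$ from \cref{lemmawedge}, it performs an Abel summation to rewrite $Z=\sum_{j=1}^{n-1}w_j\wedge(x_j-x_{j-1})$ with the partial sums $w_j\coloneq-\sum_{k\ge j}v_k$, uses the conjugation identity $w_j*g_{j-1}^{-1}*g_j*(-w_j)=g_{j-1}^{-1}*g_j*(w_j\wedge(x_j-x_{j-1}))$ to absorb each central term into a conjugated geodesic increment, and then applies the triangle inequality. You instead argue geometrically: you build an explicit horizontal curve to $Z*g_{n-1}$ by successively translating initial arcs of the projected broken geodesic by $-v_j$ and inserting connecting segments, and you track the central correction accumulating at the endpoint via the area integral and centrality of $\twowedge(\R^n)$. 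The iterative bookkeeping is sound: at step $j$ the whole endpoint picks up a left factor $(0,\pi(g_j)\wedge v_j)$, which is central and hence survives the remaining concatenation, giving $(0,Z)*g_{n-1}$ at the end; meanwhile each step adds exactly $2\lVert v_j\rVert_{\mathrm{sf}}$ of length, since translations are length-preserving for any norm on $\R^n$ and since, via the control, the sub-Finsler length of a horizontal lift equals that of its projection. Incidentally, because you translate by the $v_j$ directly rather than by the partial sums $w_j$, you obtain the better constant $K_2=2(n-1)C_1K_1$ in place of the paper's $2(n-1)^2C_1K_1$, though of course any finite constant suffices for the lemma.
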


\begin{proof}
   Let $C_1$ and $K_1$ be the constants given in \eqref{constantc1} and \cref{lemmawedge}, respectively. We prove the result for $K_2\coloneq 2C_1K_1(n-1)^2$. Fix $\varepsilon$, $Z$, and an $n$-tuple $(g_0,\dots,g_{n-1})$ as in the assumptions. For every $j \in \set{0,\dots,n-1}$, we write $g_j=(x_j,Y_j)$ with $x_j \in \R^n$ and $Y_j \in \twowedge (\R^n)$. Since $g_0 = 0_\G$, we stress that $x_0=0 \in \R^n$. By hypothesis we have that $\mh(x_1,\dots,x_{n-1}) \ge \varepsilon$. By \cref{lemmawedge}, there exist $v_1,\dots,v_{n-1} \in \R^n$ and $K_1>0$, the latter being independent of $\varepsilon$, $Z$, and $(g_0,\dots,g_{n-1})$, such that
 \begin{equation} \label{zetavj}
        Z = \sum_{j=1}^{n-1} x_j \wedge v_j \qquad \text{and $\, \displaystyle{\rVert v_j \lVert_{\mathrm{eu}} \le K_1\cdot\frac{\rVert Z \lVert_{\mathrm{eu}}}{\varepsilon}}$, for every $j \in \set{1,\dots,n-1}$.}
    \end{equation}
For every $j \in \set{1,\dots,n-1}$, we define $w_j \coloneq - \sum_{k=j}^{n-1} v_k \in \R^n$ and we compute
\begin{align}
    Z &= \sum_{j=1}^{n-1} x_j \wedge v_j = \left[ \sum_{j=1}^{n-2} -v_j \wedge x_j  \right] - \left(v_{n-1} \wedge x_{n-1}\right) \nonumber \\ &= \left[ \sum_{j=1}^{n-2} (w_j-w_{j+1}) \wedge x_j  \right] + w_{n-1} \wedge x_{n-1} - w_1 \wedge x_0 \nonumber \\
    &= \sum_{j=1}^{n-1} w_j \wedge x_j - \sum_{j=0}^{n-2} w_{j+1} \wedge x_j = \sum_{j=1}^{n-1} w_j \wedge x_j - \sum_{j=1}^{n-1} w_j \wedge x_{j-1}  \nonumber \\
    &= \sum_{j=1}^{n-1} w_j \wedge (x_j - x_{j-1}) \, . \label{zetawedge}
\end{align}
Moreover, by \eqref{zetavj}, we have
    \begin{equation} \label{normwj}
        \lVert w_j \rVert_{\mathrm{eu}} \le (n-1)K_1\cdot \frac{\lVert Z \rVert_{\mathrm{eu}}}{\varepsilon}\, , \qquad \text{for every $j \in \set{1,\dots,n-1}$.}
    \end{equation}

Fix $j \in \set{1,\dots,n-1}$, we observe that $$g_{j-1}^{-1}*g_j\stackrel{\eqref{prod}}{=}\left( x_j - x_{j-1}, Y_j - Y_{j-1} + \frac{1}{2}(x_j \wedge x_{j-1})\right) \, . $$
    Using again the product law \eqref{prod}, we compute
    \begin{align}
        w_j&*g_{j-1}^{-1}*g_j*(-w_j) = w_j*\left( x_j - x_{j-1} + Y_j - Y_{j-1} + \tfrac{1}{2}(x_j \wedge x_{j-1})\right)*(-w_j) \nonumber \\
        &= \left( x_j - x_{j-1} + w_j + Y_j - Y_{j-1} + \tfrac{1}{2}(x_j \wedge x_{j-1})+ \tfrac{1}{2}(w_j \wedge (x_j - x_{j-1})\right)*(-w_j) \nonumber \\
        &=  x_j - x_{j-1}+ Y_j - Y_{j-1} + \tfrac{1}{2}(x_j \wedge x_{j-1})+ \tfrac{1}{2}(w_j \wedge (x_j - x_{j-1}))-\tfrac{1}{2}( (x_j - x_{j-1})\wedge w_j) \nonumber \\
        &= x_j - x_{j-1} + Y_j - Y_{j-1} + \tfrac{1}{2}(x_j \wedge x_{j-1})+ w_j \wedge (x_j - x_{j-1}) \nonumber\\
        &= g_{j-1}^{-1}*g_j*\left(w_j \wedge (x_j - x_{j-1})\right) \, . \label{chaineq}
    \end{align}
We recall that $\twowedge (\R^n)$ is contained in the center of $\G$ and that  $g_0=0$, so that we can write
\begin{align*}
    Z * g_{n-1} & \stackrel{\eqref{zetawedge}}{=} \left(\sum_{j=1}^{n-1} w_j \wedge (x_j - x_{j-1})\right)* \left(g_{0}^{-1}*g_1\right)*\dots* \left(g_{n-2}^{-1}*g_{n-1}\right)  \\
    &=  g_{0}^{-1}*g_1*\left(w_1 \wedge (x_1 - x_{0})\right)* \dots *  g_{n-2}^{-1}*g_{n-1}*\left(w_{n-1} \wedge (x_{n-1} - x_{n-2})\right)  \\
    &\stackrel{\eqref{chaineq}}{=}  w_1*g_{0}^{-1}*g_1*(-w_1)*\dots*w_{n-1}*g_{n-2}^{-1}*g_{n-1}*(-w_{n-1}) \, .
\end{align*}
Therefore, since the Carnot-Carathéodory distance $\dcc$ is left invariant:
\begin{align*}
    \dcc\left(0_\G, Z*g_{n-1}\right) &\le \sum_{j=1}^{n-1} \left(\dcc(0_\G,w_j)) + \dcc\big(0_\G, g_{j-1}^{-1}*g_j\big) + \dcc(0_\G,-w_j)\right) \\
    & \stackrel{\eqref{c1_estiamate}}{\le} 2 \sum_{j=1}^{n-1} C_1 \lVert w_j \rVert_\mathrm{eu} + \sum_{j=1}^{n-1} \dcc(g_{j-1},g_j) \\
    & \stackrel{\eqref{normwj}}{=} 2C_1(n-1)^2K_1 \cdot \frac{\lVert Z \rVert_{\mathrm{eu}}}{\varepsilon} + \sum_{j=1}^{n-1} \dcc(g_{j-1},g_j) \, .
\end{align*}
The statement is then proved by setting $K_2\coloneq 2C_1K_1(n-1)^2$.
\end{proof}


\subsection{Final proofs} \label{proofs}
We are ready for the proof of \cref{lippow} and finally of \cref{rectifiability}.

\begin{proof}[Proof of \cref{lippow}]
Let $C_1$, $C_4$, and $K_2$ be the constants given in \eqref{constantc1}, \cref{abndist}, and \cref{lemmasize2}, respectively. We prove the result for $\delta_0 \coloneq 2K_2/(C_1C_4)$ and $C_5 \coloneq \max\set{1,4K_2/C_4}$. Fix $g \in \sfeu \setminus A_\delta$ and consider a geodesic $\gamma \colon [0,1] \to \G$ such that $\gamma(0)=0_\G$ and $\gamma(1) = g$. Since $g \in \sfeu \setminus A_\delta$ , then $\deu(\gamma(1),\abn(\G))\ge\delta$ and, by \cref{abndist}, there exists $C_4>0$ such that the set $\Gamma \coloneq \Set{\pi(\gamma(t)) \, | \, t \in [0,1]}$ is not contained in the $C_4\delta$-neighbourhood of any $P \in \gr(\R^n,n-2)$. Consequently, by \cref{mindelta}, there are $x_i \in \Gamma$, for $i\in \set{1,\dots,n-1}$, such that \begin{equation} \label{mhcondition}
  \mh(x_1,\dots,x_{n-1})\ge C_4\delta \, .  
\end{equation} Since the minimal height does not depend on the order, we can further assume that $x_j = \pi(\gamma(t_j))$ for $0 \le t_1 < \dots < t_{n-1}$.

We set $g_i\coloneq \gamma(t_i)$, for $i \in \set {1,\dots,n-1}$, and $g_0 \coloneq \gamma(0)=0_\G$. We also now fix $h \in \sfeu$. We write $g$ and $h$ in coordinates:
\begin{equation*}
    g = (\xi_1,W_1) \, , \quad h = (\xi_2,W_2) \, .
\end{equation*}
We then define 
\begin{equation*}
    q \coloneq \xi_2-\xi_1 \in \R^n  \, , \quad Z \coloneq W_2-W_1+\tfrac{1}{2}(\xi_2 \wedge \xi_1) \in \twowedge(\R^n) \, .
\end{equation*}
We have 
\begin{equation} \label{dccq}
   \dcc(0_\G,q) \stackrel{\eqref{c1_estiamate}}{\le} C_1 \lVert \xi_2 - \xi_1 \rVert_\mathrm{eu}  \le C_1 \deu(g,h) \, .
\end{equation}
Moreover, we also have
\begin{align}
   \lVert Z \rVert_\mathrm{eu} & \le \lVert W_2 - W_1 \rVert_\mathrm{eu} + \frac{1}{2} \lVert (\xi_2-\xi_1)\wedge \xi_1 \rVert_\mathrm{eu} \nonumber \\ &\le  \deu(g,h) + \frac{1}{2} \lVert \xi_2 - \xi_1 \rVert_\mathrm{eu} \cdot \lVert \xi_1 \rVert_\mathrm{eu} \nonumber \\ &\le  \deu(g,h) + \frac{\lVert \xi_1 \rVert_\mathrm{eu}}{2} \deu(g,h) \nonumber \\ &\le 2\cdot \deu(g,h)  \, , \label{dccz}
\end{align}
where we used that the scalar product is adequate and that $g \in \sfeu$.

By the product rule \eqref{prod}, we observe that $h=Z *g *q$. We recall that $\dcc$ is left-invariant, so that we infer
\begin{equation} \label{dcch}
    \dcc(0_\G,h)\le\dcc(0_\G,Z*g)+\dcc(Z*g,Z*g*q)\le\dcc(0_\G,Z*g_{n-1})+\dcc(g_{n-1},g)+\dcc(0_\G,q) \, .
\end{equation}
Thanks to \eqref{mhcondition}, the $n$-tuple $(g_0,\dots,g_{n-1})$ satisfies the conditions of \cref{lemmasize2} with $\varepsilon$ replaced by $C_4\delta$, so we estimate the first term of \eqref{dcch}:
\begin{equation*}
    \dcc(0_\G,Z*g_{n-1}) \le K_2 \cdot \frac{\lVert Z \rVert_\mathrm{eu}}{C_4\delta} + \sum_{i=1}^{n-1} \dcc(g_{j-1},g_j) \, .
\end{equation*} 
We shall use the previous inequality in \eqref{dcch} and use that $g_0,\dots,g_{r-1},g$ are consecutive points along the geodesic $\gamma$. If $\delta \le \delta_0 \coloneq 2K_2/(C_1C_4)$, we get
\begin{align*}
   \dcc(0_\G,h) &\le  K_2 \cdot \frac{\lVert Z \rVert_\mathrm{eu}}{C_4\delta} + \sum_{i=1}^{n-1} \dcc(g_{j-1},g_j) + \dcc(g_{n-1},g)+\dcc(0_\G,q) \\ &= \dcc(0_\G,g) + \frac{K_2}{C_4\delta}\cdot \lVert Z \rVert_\mathrm{eu} + \dcc(0_\G,q) \\ 
   &\stackrel{\eqref{dccq}\eqref{dccz}}{\le} \dcc(0_\G,g) + \frac{2K_2}{C_4\delta}\deu(g,h) + C_1\deu(g,h) \\ &\le \dcc(0_\G,g) + \frac{4K_2}{C_4\delta}\deu(g,h) \, .
\end{align*}
The statement is then proved by setting $C_5 \coloneq \max\set{1,4K_2/C_4}$.
\end{proof}

Here is the proof of the main result:


\begin{proof}[Proof of \cref{rectifiability}]
We see $\G$ as in \eqref{def_G} with an Euclidean distance as in \cref{setting}, denoted by $\deu$, so to have the sphere $\sfeu$ as in \eqref{def_seu}. 
To prove the Euclidean rectifiability of $\sfcc$, since $\sfeu$ is locally bi-Lipschitz equivalent to $\R^{d-1}$,
we consider the sets $E_n$ for \eqref{eq_def_rect} to be subsets of $\sfeu $.
With the notation \eqref{def_A_delta}, we then define 
\begin{align*}
    E_n \coloneq A_{2^{-n}} \setminus A_{2^{-n-1}} &= \set{x \in \sfeu \, | \, 2^{-n-1} < \deu(x,\abn(\G)\le 2^{-n}} \, , \quad \forall n \ge 1 \quad \text{and}
    \\ E_0 \coloneq \sfeu \setminus A_{1/2} &= \set{x \in \sfeu \, | \, \deu(x,\abn(\G) > \tfrac{1}{2}} \, .
\end{align*}
We denote by $  \mathcal{H}^{d-1}=\mathcal{H}_{\mathrm{Riem}}^{d-1}$ the $(d-1)$-dimensional Hausdorff measure with respect to the distance $\deu$.
We obviously have that $\mathcal{H}^{d-1}(E_0) \le \mathcal{H}^{d-1}(\sfeu)$.
By \cref{dimabn} we also get \begin{equation}\label{stima_C4}
    \mathcal{H}^{d-1}(E_n) \le \mathcal{H}^{d-1}(A_{2^{-n}}) \le C_3 \cdot 2^{-3n}. 
\end{equation}

Let $d_0(\cdot)\coloneq  \dcc(0_\G,\cdot)$ and $\phi$    be  the dilating map from \cref{diffeophi}.
We consider a sequence of functions $(f_n)_{n\in \N}$, where $f_n$ is the image by    $\phi$ of the graphing map of $d_0$ restricted to $E_n$. Namely, 
\begin{equation*}
    f_n \colon E_n \to \G \qquad  x \mapsto f_n(x)\coloneq \phi(x,d_0(x)) 
    =\delta_{d_0(x)}^{-1}(x).
\end{equation*}
Our aim is to show that the maps $\{f_n\}_n$ give the  Euclidean rectifiability of $\sfcc$. 

 We first explain why the maps $f_n$ are Lipschitz. Notice that each of these maps is the composition between the graph map
 $x \mapsto  (x,d_0(x)) $, which has Lipschitz constant $\left(1+ \mathrm{Lip}(d_0|_{E_{n}})\right)$, and the diffeomorphism $\phi$. Moreover, the graph of $d_0$ is contained in a compact set, on which therefore $\phi$ is Lipschitz with constant $L$. We use \cref{lippow}, recalling that $C_5 \ge 1$, to get 
  \begin{equation} \label{lipn}
     \mathrm{Lip}(f_{n\,|E_n}) \le L\left(1+ \mathrm{Lip}(d_0|_{E_{n}}) \right) \le L\left(1+C_5 \cdot 2^{n+1}\right)\leq  C_5L \cdot 2^{n+2}. 
  \end{equation}
  Thus, the maps $f_n$ are Lipschitz. 

  We next prove that the sets $f_n(E_n)$ cover almost all of $\sfcc$. Since $\bigcup^\infty_{n=0} E_n = \sfeu \, \setminus \, \abn(\G)$ and each function $f_n$ is a dilation map, by recalling  that $\abn(\G)$ is invariant under dilations, we get 
  \begin{equation}\label{eq_sfera_dentro}
      \sfcc \,  \setminus \, \bigcup^\infty_{n=0} f_n(E_n) = \sfcc \cap \abn(\G) \subseteq \abn(\G) \, .
  \end{equation}
  Moreover, from \cref{codim3} we have that the Hausdorff dimension of $\abn(\G)$ is at most $d-3$ and therefore $\mathcal{H}^{d-1}(\abn(\G))=0$. Thus
\begin{equation}\label{complementare_nullo}
            \mathcal{H}^{d-1}\left( \sfcc \,  \setminus \, \bigcup^\infty_{n=0} f_n(E_n) \right)
            \stackrel{\eqref{eq_sfera_dentro}}{\le} \mathcal{H}^{d-1}\left(\abn(\G) \right) =0 \, .
        \end{equation}
        
It is left to prove that   $ \mathcal{H}^{d-1}\left( \sfcc   \right)$ is finite.
Because of \eqref{complementare_nullo}, it is then sufficient to prove 
\begin{equation}\label{misura_finita}
    \mathcal{H}^{d-1}\left(\bigcup^\infty_{n=0} f_n(E_n)\right)=\sum_{n=0}^\infty \mathcal{H}^{d-1}\left(f_n(E_n)\right)  <\infty \, .
\end{equation}
We then check that we have a summable sequence: 
\begin{align*}
\mathcal{H}^{d-1}\left(f_n(E_n)\right) 
&= \mathcal{H}^{d-1}\left(\phi(\mathrm{gr}   (d_0|_{E_{n}}))\right) \\
& \le C_2 \cdot \mathcal{H}^{d-1}_{\mathrm{eu}}(E_n) \cdot \left( \mathrm{Lip}(d_{0|_{E_{n}}}) + 1\right) \\ &\stackrel{\eqref{stima_C4}}{\le} C_2 C_3\cdot 2^{-3n} \cdot \left( \mathrm{Lip}( d_{0|_{E_{n}}}) + 1\right) \\ &\stackrel{\eqref{lipn}}{\le} C_2C_3C_5L \cdot 2^{-3n} \cdot 2^{n+2} = C_2C_3C_5L \cdot  2^{-2n+2} \, , 
\end{align*}
where in the first inequality  we used \cref{areaformula}. Hence, \eqref{misura_finita} is obtained and the proof of the rectifiability of $\sfcc$
is concluded.
\end{proof}

\begin{remark}
    We observe that we could have obtained the convergence of the series in \eqref{misura_finita} even with weaker versions of \cref{dimabn} and \cref{lippow} which we used in \eqref{stima_C4} and \eqref{lipn}, respectively. As an example, with could either work in a setting where, for $\varepsilon>0$, the abnormal set has finite Minkowski content of co-dimension $1+\varepsilon$, or a setting where the Lipschitz constant of $\dcc$ behaves like the $\varepsilon-3$ power of the distance from the abnormal set, or a combination of the two.
\end{remark}

\subsection{A sharp example} We conclude providing an example that shows the optimality of \cref{lippow} in the following sense:

\begin{proposition} \label{optimality}
  There exist a sub-Finsler free-Carnot group $\G$ of step $2$, equipped with an adequate scalar product, and numbers $C,\delta_0 >0$ such that for every $0 < \delta \le \delta_0$ there exists $g_\delta \in \sfeu \setminus A_\delta$ with the property that, for every $\varepsilon<0$, there exists $h_{\varepsilon,\delta} \in \sfeu$ with $0< \deu(g_\delta,h_{\varepsilon,\delta})\le \varepsilon$ such that
\begin{equation*}
         \dcc(0_\G,h_{\varepsilon,\delta}) \ge \dcc(0_\G,g_\delta) +    \frac{C}{\delta} \cdot \deu(g_\delta,h_{\varepsilon,\delta}) \, . 
    \end{equation*}  
\end{proposition}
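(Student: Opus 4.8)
The plan is to exhibit a fully explicit example in the smallest nontrivial case. I would take $\G$ to be the free-Carnot group of step $2$ and rank $3$, that is $\G=\R^3\times\twowedge(\R^3)$ with the product \eqref{prod}, endowed with the adequate scalar product extending the standard one on $\R^3$ (so that $e_1\wedge e_2,\,e_1\wedge e_3,\,e_2\wedge e_3$ is an orthonormal basis of $\twowedge(\R^3)$), and with the sub-Finsler norm $\lVert\cdot\rVert_{\mathrm{sf}}=\lVert\cdot\rVert_{\ell^1}$ on $\R^3$. By \eqref{abnormalset} the abnormal set is then the coordinate subspace $\abn(\G)=\R^3\times\{0\}$, and since $\R^3\perp\twowedge(\R^3)$ one has the clean identity $\deu((x,Y),\abn(\G))=\lVert Y\rVert_{\mathrm{eu}}$.

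For small $\delta>0$ I would put $g_\delta\coloneq\bigl(\sqrt{1-4\delta^2}\,e_1,\,2\delta\,e_1\wedge e_2\bigr)$, which lies on $\sfeu$ and satisfies $\deu(g_\delta,\abn(\G))=2\delta>\delta$, hence $g_\delta\in\sfeu\setminus A_\delta$; and for small $\eta>0$, $h_\eta\coloneq\bigl(\sqrt{1-4\delta^2-\eta^2}\,e_1,\,2\delta\,e_1\wedge e_2+\eta\,e_2\wedge e_3\bigr)\in\sfeu$, noting $\deu(g_\delta,h_\eta)=\eta\bigl(1+O(\eta^2)\bigr)$. Given $\varepsilon$, the point $h_{\varepsilon,\delta}$ of the statement will be $h_\eta$ for $\eta$ chosen small enough that $\deu(g_\delta,h_\eta)\le\varepsilon$ and the slope estimate below holds.

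The heart of the argument is to compute $\dcc(0_\G,g_\delta)$ exactly and to bound $\dcc(0_\G,h_\eta)$ from below by a quantity that differs from it only by a lower-order term. For a control $u=(u_1,u_2,u_3)$ the associated curve has length $\int_0^1(|u_1|+|u_2|+|u_3|)$, and, writing $P_i(t)=\int_0^t u_i$, the $\twowedge(\R^3)$-part of its endpoint has coordinates $Y_{ij}=\int_0^1 P_i u_j-\tfrac12 x_i x_j$, where $x=\pi(\gamma(1))$. For $g_\delta$ one has $x_2=x_3=0$, hence $Y_{12}=-\int_0^1 u_1 P_2$ after integration by parts; thus $2\delta\le\bigl(\int|u_1|\bigr)\lVert P_2\rVert_\infty$ and $\int|u_2|\ge 2\lVert P_2\rVert_\infty$, and since $\int|u_1|\ge\lVert\pi(g_\delta)\rVert_{\ell^1}=\sqrt{1-4\delta^2}$, a one-variable minimization gives $\dcc(0_\G,g_\delta)\ge\sqrt{1-4\delta^2}+4\delta/\sqrt{1-4\delta^2}$; the concatenation ``$-e_2$, then $+e_1$, then $+e_2$'' with $e_1$-leg of length $\sqrt{1-4\delta^2}$ and each $e_2$-leg of length $2\delta/\sqrt{1-4\delta^2}$ attains it, so it is an equality. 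For $h_\eta$ the extra constraints $Y_{13}=0$ and $Y_{23}=\eta$ enter; the key new inequality is $\eta=|Y_{23}|=\bigl|\int_0^1 P_2 u_3\bigr|\le\tfrac12(\max P_2-\min P_2)\int_0^1|u_3|$, which couples the amount of $u_3$-motion to the oscillation of $P_2$. Running the same optimization — now with $\int|u_2|\ge 2(\max P_2-\min P_2)$ and the above bound on $\int|u_3|$, and matching it with an explicit concatenation that performs a $+e_3/-e_3$ excursion while $P_1=0$ so that $Y_{13}$ stays zero — one finds $\dcc(0_\G,h_\eta)=\sqrt{1-4\delta^2-\eta^2}+4\delta/\sqrt{1-4\delta^2-\eta^2}+\eta\sqrt{1-4\delta^2-\eta^2}/\delta$. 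Subtracting, $\dcc(0_\G,h_\eta)-\dcc(0_\G,g_\delta)=\tfrac{\eta}{\delta}\sqrt{1-4\delta^2}+O(\eta^2)$, while $\deu(g_\delta,h_\eta)=\eta+O(\eta^3)$; dividing and taking $\eta$ small then gives the inequality of the statement with, say, $C=\tfrac12\sqrt{1-4\delta_0^2}$ and $\delta_0>0$ small enough.

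The only delicate point is the lower bound for $\dcc(0_\G,h_\eta)$: the discrepancy we must detect, of order $\eta/\delta$, is negligible compared with the $O(\delta)$ gap between $\dcc(0_\G,g_\delta)$ and $\lVert\pi(g_\delta)\rVert_{\mathrm{sf}}$, so the lower bound has to be accurate to leading order in $\eta$; this is what forces one to track $\max P_2$ and $\min P_2$ \emph{separately} (not merely $\lVert P_2\rVert_\infty$), so that the $u_2$-cost and the $u_3$-cost are coupled in exactly the way the extremal concatenation realizes. Everything else — membership of $g_\delta,h_\eta$ in $\sfeu\setminus A_\delta$, the estimate on $\deu(g_\delta,h_\eta)$, and the check that neither $e_1$-backtracking nor extra $e_3$-motion can lower the cost — is elementary; in particular no appeal to the bang-bang structure of sub-Finsler geodesics is needed, only the triangle inequality and the integral identities for $Y_{ij}$.
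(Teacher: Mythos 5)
Your proposal is correct and, while reaching the same conclusion, it follows a genuinely different route from the paper's. The paper works in rank~$4$ and picks $g_\delta = e_1\wedge e_2 + \delta e_3$, $h_{\varepsilon,\delta}=g_\delta+\varepsilon e_3\wedge e_4$; the whole point of that choice is that the $\ell^1$ cost and the constraints split across $\spn\{e_1,e_2\}$ and $\spn\{e_3,e_4\}$, so concatenating the two partial controls gives $\dcc(0_\G,h_{\varepsilon,\delta})=\dcc(0_\G,e_1\wedge e_2)+\dcc(0_\G,\delta e_3+\varepsilon e_3\wedge e_4)$, and the second term is read off from the known classification of $\ell^1$ geodesics in the Heisenberg group. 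Your rank-$3$ example does not decompose in this way (the $u_3$-motion couples to $u_1,u_2$ through $Y_{13},Y_{23}$), and you replace the appeal to the Heisenberg classification by a direct optimization: the inequalities $\int|u_2|\ge 2(\max P_2-\min P_2)$, $2\delta\le(\max P_2-\min P_2)\int|u_1|$, and the centred bound $|\int P_2 u_3|\le\tfrac12(\max P_2-\min P_2)\int|u_3|$ together give the lower bound $a_1+4\delta/a_1+\eta a_1/\delta$ with $a_1:=\sqrt{1-4\delta^2-\eta^2}$, and it is indeed attained by the five-piece concatenation $+c\,e_3,\,-b\,e_2,\,-c\,e_3,\,+a_1\,e_1,\,+b\,e_2$ (with $a_1 b=2\delta$, $bc=\eta$), which keeps $P_1=0$ during the two $e_3$-legs so that $Y_{13}=0$ while $P_2$ changes sign between them so that $Y_{23}=bc$. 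This is more self-contained than the paper's argument (no external reference is needed), at the cost of the slightly more delicate optimization; the paper's rank-$4$ choice sidesteps exactly that delicacy. Two small remarks: the ``running the same optimization'' step is compressed and would need to be spelled out as above for a complete write-up, and strictly speaking you only need the \emph{lower} bound for $\dcc(0_\G,h_\eta)$ — the matching upper bound is not logically required, though it is a nice check. A minor aesthetic plus of your version is that $g_\delta,h_\eta$ lie directly on $\sfeu$, so the paper's preliminary reduction via the nearest-point projection $\sigma$ is unnecessary.
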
 

We firstly claim that, to prove \cref{optimality}, it not restrictive to consider $g_\delta$ and $h_{\varepsilon,\delta}$ to be in the $\frac{1}{2}$-tubular neighbourhood (with respect to the metric $\deu$) of $\sfeu$ instead of being in $\sfeu$. Indeed, their projections into $\sfeu$, through the map $\sigma$ defined as in \eqref{map_pi}, will satisfy the same inequality with a modified constant $C$, not depending on $g_\delta$ and $h_{\varepsilon,\delta}$. The proof of \cref{optimality} is then given by the following example.

\begin{example} 
Consider the free-Carnot group $\G \coloneq \R^4 \times \twowedge (\R^4)$ of step $2$ and rank $4$, and $\set{e_1,e_2,e_3,e_4}$ to be the standard basis of $\R^4$. Consider on $\G$ the adequate scalar product such that $e_1,e_2,e_3,e_4$ are orthonormal. Consider $\lVert \cdot \rVert_{\mathrm{sf}}$ to be the $\ell^1$ norm with respect to the basis $\set{e_1,e_2,e_3,e_4}$. For every $\varepsilon,\delta \ge 0$, we define
\begin{equation*}
    g_\delta \coloneq e_1 \wedge e_2 + \delta e_3 \, , \qquad h_{\varepsilon,\delta} \coloneq g_\delta + \varepsilon e_3 \wedge e_4 \, .
\end{equation*}
It is easy to show that, if $\delta \le 1/2$, then $\deu(g_\delta,\abn(\G))=\delta$ and $1\le \deu(0_\G,g_\delta) \le 3/2$. 
Let us consider $\gamma \colon [0,1] \to \G$ to be a geodesic with $\gamma(0)=0_G$ and $\gamma(1)=h_{\varepsilon,\delta}$, denote by $u \in L^1([0,1],\g_1)$ the control associated to $\gamma$. Define the map $\pi_{12} \colon \g_1 \to \spn\set{e_1,e_2}$ to be the projection parallel to $\spn\set{e_3,e_4}$ and $\pi_{34} \coloneq \mathrm{id}_{\g_1} - \pi_{12}$. Let us define $u_{12}\coloneq\pi_{12}(u)$ and $u_{34}\coloneq \pi_{34}(u)$. It easily follows that
\begin{equation*}
    \gamma_{u_{12}}(1)= e_1 \wedge e_2 \, , \qquad \gamma_{u_{34}}(1) = \delta e_3  + \varepsilon e_3 \wedge e_4 \, .
\end{equation*}

Let us define the control $\widetilde{u} \colon [0,2] \to \g_1$ as
\begin{equation*}
    \widetilde{u}(t) \coloneq \begin{cases}
        u_{12}(t) &\quad \text{if $t \in [0,1]$,} \\ u_{34}(t-1) &\quad \text{if $t \in [1,2]$.}
    \end{cases}
\end{equation*}
We then observe that
\begin{align*}
    \ell(\gamma_{\widetilde{u}})= \int^1_0 \lvert u_{12}(t) \rvert \, \de t + \int^1_0 \lvert u_{34}(t) \rvert \, \de t = \int^1_0 \lvert u_{12}(t) +u_{34}(t) \rvert \, \de t = \int^1_0 \lvert u(t) \rvert \, \de t = \ell(\gamma_u) \, ,
\end{align*}
so that $\gamma_{\widetilde{u}}$ is again a geodesic from $0_\G$ to $h_{\varepsilon,\delta}$. Moreover $\gamma_{\widetilde{u}}(1)=e_1 \wedge e_2$ and $\gamma_{\widetilde{u}}(2)=(e_1 \wedge e_2)*(\delta e_3 + \varepsilon e_3 \wedge e_4)$. Since $e_1 \wedge e_2$ is in the center of $\G$ we also have that $\dcc(\gamma_{\widetilde{u}}(1), \gamma_{\widetilde{u}}(2))= \dcc(0_\G, \delta e_3 + \varepsilon e_3 \wedge e_4)$. The latter proves that $\dcc(0_\G,h_{\varepsilon,\delta}) = \dcc(0_G, e_1 \wedge e_2) + \dcc(0_\G, \delta e_3 + \varepsilon e_3 \wedge e_4)$, so that
\begin{equation} \label{example}
    \dcc(0_\G,h_{\varepsilon,\delta}) - \dcc(0_\G,g_\delta) = \dcc(0_\G, \delta e_3 + \varepsilon e_3 \wedge e_4) - \dcc(0_\G, \delta e_3) \, ,
\end{equation}
and to compute the terms in the right hand side it is not restrictive to consider geodesics contained in $\spn\set{e_3,e_4,e_3 \wedge e_4}$, that is isometric to the Heisenberg group equipped with the $\ell^1$ norm on the horizontal distribution. Finally, we remark that $\deu(h_{\varepsilon,\delta},g_\delta)=\varepsilon$ and that the complete classification of geodesics for the Heisenberg group equipped with the $\ell^1$ norm (see \cite{rate}) allows us to compute explicitly
\begin{equation*}
    \dcc(0_\G, \delta e_3) = \delta \, , \qquad \dcc(0_\G, \delta e_3 + \varepsilon e_3 \wedge e_4) = \delta + 2\frac{\varepsilon}{\delta} \qquad \text{for every $\varepsilon \le \delta^2$.} 
\end{equation*}
By combining the latter with \eqref{example}, we conclude that
\begin{equation*}
    \dcc(0_\G,h_{\varepsilon,\delta}) = \dcc(0_\G,g_\delta) + \frac{2}{\delta}\varepsilon = \dcc(0_\G,g_\delta) + \frac{2}{\delta} \deu(h_{\varepsilon,\delta},g_\delta) \, .
\end{equation*}

\end{example}

\bibliographystyle{abbrv}
\bibliography{biblio}

\end{document}